\numberwithin{equation}{section}
\newtheorem{theorem}{Theorem}[section]
\newtheorem{definition}[theorem]{Definition}
\newtheorem{remark}[theorem]{Remark}
\newtheorem{lemma}[theorem]{Lemma}
\newtheorem{example}[theorem]{Example}
\newtheorem{corollary}[theorem]{Corollary}
\newcommand{\edge}{\ar@{-}}
\newcommand{\pf}{\noindent\begin {proof}}
\newcommand{\epf}{\end{proof}}
\newcommand{\extdim}{\mbox{\rm ext.dim}}
\def\Im{\mathop{\rm Im}\nolimits}
\def\Ker{\mathop{\rm Ker}\nolimits}
\def\La{\mathop{\rm \Lambda}\nolimits}
\def\mod{\mathop{\rm mod}\nolimits}
\def\pd{\mathop{\rm pd}\nolimits}
\def\max{\mathop{\rm max}\nolimits}
\def\sup{\mathop{\rm sup}\nolimits}
\def\inf{\mathop{\rm inf}\nolimits}
\def\add{\mathop{\rm add}\nolimits}
\def\gldim{\mathop{\rm gl.dim}\nolimits}
\def\rad{\mathop{{\rm rad}}\nolimits}
\def\top{\mathop{{\rm top}}\nolimits}
\def\dim{\mathop{\rm dim}\nolimits}
\def\tridim{\mathop{\rm tri.dim}\nolimits}
\def\sup{\mathop{\rm sup}\nolimits}
\def\lim{\mathop{\underrightarrow{\rm lim}}\nolimits}
\def\mod{\mathop{\rm mod}\nolimits}
\def\pd{\mathop{\rm pd}\nolimits}
\def\max{\mathop{\rm max}\nolimits}
\def\sup{\mathop{\rm sup}\nolimits}
\def\inf{\mathop{\rm inf}\nolimits}
\def\add{\mathop{\rm add}\nolimits}
\def\gldim{\mathop{\rm gl.dim}\nolimits}
\def\rad{\mathop{{\rm rad}}\nolimits}
\def\top{\mathop{{\rm top}}\nolimits}
\def\dim{\mathop{\rm dim}\nolimits}
\def\sup{\mathop{\rm sup}\nolimits}
\def\lim{\mathop{\underrightarrow{\rm lim}}\nolimits}
\def\repdim{\mathop{\rm rep.dim}\nolimits}
\def\V{\mathop{\rm \mathcal{V}}\nolimits}
\def\T{\mathop{\rm \mathcal{T}}\nolimits}
\def\I{\mathop{\rm \mathcal{I}}\nolimits}
\def\LL{\mathop{\rm LL}\nolimits}
\title{ \bf The derived dimensions of $(m,n)$-Igusa-Todorov algebras
\thanks{2020 Mathematics Subject Classification: 18G20, 16E10, 16E35 .}
\thanks{Keywords:
Igusa-Todorov, the bounded derived category, radical layer length, extension dimension, triangulated categories. }}
\author { \ Junling  Zheng\thanks{Email: zhengjunling@cjlu.edu.cn}
\\
{\it \scriptsize  Department of Mathematics, China Jiliang University, Hangzhou, 310018, P. R. China
}}
\date{ }
\begin{document}

\baselineskip=16pt


\maketitle

\begin{abstract}

We give an upper bound for the dimension of the bounded derived categories of
$(m,n)$-Igusa-Todorov algebras which is a generalization of $n$-Igusa-Todorov algebras,
where $m,n$ are two nonnegative integers. As an applications, we get a new upper bound for the
dimension of bounded derived categories in terms of the projective dimensions of certain of simple modules as well as
radical layer length of artin algebra $\Lambda$.
\end{abstract}

\pagestyle{myheadings}
\markboth{\rightline {\scriptsize  J. L. Zheng\emph{}}}
         {\leftline{\scriptsize
         $(m,n)$-Igusa-Todorov algebras}}


\section{Introduction} 

Given a triangulated category $\mathcal{T}$, Rouquier introduced in
 \cite{rouquier2006representation} the dimension $\tridim\mathcal{T}$ of $\mathcal{T}$
under the idea of Bondal and van den Bergh in \cite{bondal2003generators}.
 This dimension and the infimum of the Orlov spectrum of $\mathcal{T}$
coincide, see \cite{orlov2009remarks,ballard2012orlov}. Roughly speaking,
 it is an invariant that measures how quickly
 the category can be built from one object.
Many authors have studied the upper bound of $\tridim \mathcal{T}$,
 see \cite{ballard2012orlov, bergh2015gorenstein, chen2008algebras,
  han2009derived, krause2006rouquier, oppermann2012generating,
   rouquier2006representation, rouquier2008dimensions,
   zheng2020thedimension,zheng2020extension,zheng2020upper} and so on.
There are a lot of triangulated categories having infinite dimension,
for instance, Oppermann and \v St'ov\'\i \v cek proved
in \cite{oppermann2012generating} that
all proper thick subcategories of the bounded derived category
 of finitely generated modules over a Noetherian algebra
containing perfect complexes have infinite dimension.

Let $\Lambda$ be an artin algebra. Let $\mod \Lambda$ be the
category of finitely generated right $\Lambda$-modules
and let $D^{b}(\mod \Lambda)$ be the bounded derived category of
$ \Lambda$. For convenience, $\tridim D^{b}(\mod \Lambda)$ also is said to
be the derived dimension of artin algebra $\Lambda$(for example, see \cite{chen2008algebras}).

The dimension of triangulated category plays
an important role in representation theory(\cite{ballard2012orlov,
bergh2015gorenstein,
chen2008algebras,
han2009derived,
oppermann2012generating,
rouquier2006representation,
rouquier2008dimensions,
zheng2020upper}).
For example, it can be used to study the representation dimension of
artin algebras (\cite{rouquier2006representation,oppermann2009lower}).
Similar to the dimension of triangulated categories, the (extension) dimension of an abelian category
was introduced by Beligiannis in \cite{beligiannis2008some}, also see \cite{dao2014radius}. The size of the extension dimension reflects how far an artin algebra is from a finite representation type, some relate result can
be see \cite{zheng2020extension,zheng2020thedimension,zheng2021matrix} and so on.
We denote $\extdim \mod \Lambda$ by the extension dimension of $\mod \Lambda$,
 and $\LL(\Lambda)$ by the Loewy length of $\Lambda$,
  $\gldim \Lambda$ by the global dimension of $\Lambda$,
  $\repdim \Lambda$ by the representation dimension of $\Lambda$ (see \cite{auslander1999representation}).
 The upper bounds for the dimensions of
 the bounded derived category of
$\mod \Lambda$ can be given in terms of
 $\LL(\Lambda)$,  $\gldim\Lambda$, $\repdim \Lambda$, and $\extdim \mod \Lambda$.
Given an non-semisimple artin algebra $\Lambda$, these dimensions have the following relation
$$\extdim \mod \Lambda \leqslant \max\{\LL(\Lambda)-1, \gldim \Lambda, \repdim \Lambda-2\}.$$

In \cite{wei2009finitistic}, Wei introduced the notion of $n$-Igusa-Todorov algebra. The relation between extension dimension and $0$-Igusa-Todorov algebra is that artin algebra $\Lambda$ is
$0$-Igusa-Todorov algebra if and only if $\extdim \mod \Lambda \leqslant 1$(see \cite[Proposition 3.15]{zheng2020extension}).
On the other hand, the authors in \cite{zheng2020thedimension} give an upper bound for the derived dimension of $n$-Igusa-Todorov.
For the sake of convenience, we will introduce the notion of $(m,n)$-Igusa-Todorov algebras, where
$m,n$ are two nonnegative integers, which is a generalization of $n$-Igusa-Todorov algebra.
Moreover, we also give an upper bound for the dimension of bounded derived categories of
$(m,n)$-Igusa-Todorov algebras in terms of $m$ and $n$.

For a length-category $\mathcal{C}$,
generalizing the Loewy length, Huard, Lanzilotta and Hern\'andez introduced
in \cite{huard2013layer,huard2009finitistic} the (radical) layer
length associated with a torsion pair, which is a new measure for objects of
$\mathcal{C}$. Let $\Lambda$ be an artin algebra and $\mathcal{V}$
a set of some simple modules in $\mod \Lambda$.
Let $t_{\mathcal{V}}$ be the torsion radical of a
torsion pair associated with $\mathcal{V}$ (see Section 4 for details). We use
$\ell\ell^{t_{\mathcal{V}}}(\Lambda)$ to
denote the $t_{\mathcal{V}}$-radical layer length of $\Lambda$.
For a module $M$ in $\mod \Lambda$, we use $\pd M$ to denote the
 projective dimensions of $M$;
in particular, set $\pd M=-1$ if $M=0$.
 For a subclass $\mathcal{B}$ of $\mod \Lambda$,
  the {\bf projective dimension} $\pd\mathcal{B}$
of $\mathcal{B}$ is defined as
\begin{equation*}
\pd \mathcal{B}=
\begin{cases}
\sup\{\pd M\;|\; M\in \mathcal{B}\}, & \text{if} \;\; \mathcal{B}\neq \varnothing;\\
-1,&\text{if} \;\; \mathcal{B}=\varnothing.
\end{cases}
\end{equation*}
 Note that $\mathcal{V}$ is a finite set. So, if each simple module
in $\mathcal{V}$ has finite projective dimension,
 then $\pd \mathcal{V}$ attains its (finite) maximum.

Now, let us list some results about the upper bound of
the dimension of bounded derived categries.
\begin{theorem} \label{thm1.1}
Let $\Lambda$ be an artin algebra and $\mathcal{V}$
a set of some simple modules in $\mod \Lambda$. We have
\begin{enumerate}
\item[$(1)$] {\rm (\cite[Proposition 7.37]{rouquier2008dimensions})}
 $\tridim D^{b}(\mod \Lambda) \leqslant \LL(\Lambda)-1;$
\item[$(2)$] {\rm (\cite[Proposition 7.4]{rouquier2008dimensions}
and \cite[Proposition 2.6]{krause2006rouquier})}
 $\tridim D^{b}(\mod \Lambda) \leqslant \gldim \Lambda;$
\item[$(3)$] {\rm (\cite[Theorem 3.8]{zheng2020upper})}
  $\tridim D^{b}(\mod \Lambda) \leqslant
  (\pd\mathcal{V}+2)(\ell\ell^{t_{\mathcal{V}}}(\Lambda)+1)-2;$
\item[$(4)$] {\rm (\cite{zheng2020thedimension})}
$\tridim D^{b}(\mod \Lambda) \leqslant
2(\pd\mathcal{V}+\ell\ell^{t_{\mathcal{V}}}(\Lambda))+1;$

\item[$(5)$] {\rm (\cite{zheng2021radicalfinite})} if $\ell\ell^{t_{\V}}(\Lambda_{\Lambda})\leqslant 2$,
$\tridim D^{b}(\mod \Lambda) \leqslant
\pd\mathcal{V}+3.$
\end{enumerate}
\end{theorem}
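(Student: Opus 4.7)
The plan is to establish each of the five bounds by the same general template: exhibit a small classical generator $G$ in $D^{b}(\mod \Lambda)$ and count how many cone steps are needed to reach every object from $G$. A preliminary reduction handles bounded complexes via stupid truncation: a complex of cohomological length $\ell$ can be built from its $\ell$ cohomology modules with $\ell-1$ cones, so it suffices to bound the generation time of $\mod \Lambda$ inside $D^{b}(\mod \Lambda)$.

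For (1) I would take $G$ to be the semisimple module $\Lambda/\rad\Lambda$. For any finitely generated $M$, the radical filtration $M\supseteq \rad M\supseteq \cdots\supseteq \rad^{\LL(\Lambda)}M=0$ produces $\LL(\Lambda)$ short exact sequences whose successive quotients are semisimple; the corresponding chain of triangles shows that $M$ is built from $\add G$ in $\LL(\Lambda)$ pieces, hence $\tridim D^{b}(\mod\Lambda)\leqslant\LL(\Lambda)-1$. For (2) I would take $G=\Lambda$ and invoke a projective resolution of length at most $\gldim\Lambda$; iterating the cones of the syzygy short exact sequences supplies the bound.

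For (3)--(5) the strategy shifts to the torsion radical $t_{\V}$. The $t_{\V}$-layer filtration of an arbitrary $M$ has $\ell\ell^{t_{\V}}(\Lambda)$ steps whose successive quotients either lie in the torsion class generated by $\V$ or are annihilated by $t_{\V}$; in either case, after taking up to $\pd\V$ syzygies relative to a resolution built from the simples in $\V$ and the projectives, each quotient can be rewritten inside $\add(\Lambda\oplus \V\oplus\Omega \V\oplus\cdots\oplus \Omega^{\pd\V}\V)$. Taking this additive closure as the generator and counting cones from both filtrations multiplies the two lengths and yields (3). A more economical choice of generator, which absorbs several consecutive layers at once in the spirit of Igusa--Todorov, replaces the product by the sum and gives (4). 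Part (5) drops out by direct inspection because when $\ell\ell^{t_{\V}}(\Lambda)\leqslant 2$ there is essentially no combinatorial freedom left in how the two filtration directions interact.

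The main obstacle in (3)--(5) is the design of the generator so that cones along the projective-depth direction and along the layer-length direction do not compound naively. The key lemma one must invoke is that syzygies of modules in $\gen\V$ remain inside a fixed additive closure, so that consecutive layers can be resolved simultaneously rather than independently; this is precisely the ingredient which moves the argument from the multiplicative bound in (3) to the additive bound in (4), and which collapses the count further in (5).
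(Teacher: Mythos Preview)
First, note that the paper does not prove this theorem: it is a summary of previously known bounds, each credited to its source, stated only to contextualise the paper's new results. There is therefore no proof in the paper to compare your sketch against.

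On its own merits, your sketch has a real gap in the preliminary reduction. The claim that ``a complex of cohomological length $\ell$ can be built from its $\ell$ cohomology modules with $\ell-1$ cones'' is false in general (a bounded complex need not be an iterated extension of its cohomology; that is formality), and even the correct stupid-truncation statement --- building $\mathbf{X}$ from its \emph{terms} $X_i$ --- costs a number of cones equal to the amplitude of $\mathbf{X}$, which is unbounded. So ``it suffices to bound the generation time of $\mod\Lambda$'' does not follow: knowing that every module lies in $\langle G\rangle_{n}$ does not force every complex into $\langle G\rangle_{n}$. The actual proofs of (1) and (2) avoid this by working directly with complexes. For (1), Rouquier filters the complex $\mathbf{X}$ termwise by $\rad^j\mathbf{X}$; each subquotient is a bounded complex of semisimple modules, hence a direct sum of shifted simples and already in $\langle\Lambda/\rad\Lambda\rangle_1$, giving $\mathbf{X}\in\langle\Lambda/\rad\Lambda\rangle_{\LL(\Lambda)}$. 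For (2) one iterates the syzygy-of-a-complex triangle $\Omega(\mathbf{X})\to P^{\mathbf{X}}\to\mathbf{X}$ (exactly the device used in the paper's proof of Theorem~\ref{maintheorem1}); after $\gldim\Lambda$ steps the complex $\Omega^{\gldim\Lambda}(\mathbf{X})$ has projective terms and projective homology, hence lies in $\langle\Lambda\rangle_1$ by Lemma~\ref{bound_complex_X}. Your outlines for (3)--(5) are too vague to assess --- phrases like ``replaces the product by the sum'' and ``drops out by direct inspection'' stand in for substantial arguments (compare the length of the proof of the closely related Theorem~\ref{maintheorem2} in this paper), and the key input is not that syzygies of $\gen\V$ stay in a fixed additive closure but rather that the torsion-free layers $q_{t_{\V}}F^i_{t_{\V}}(M)$ lie in $\mathfrak{F}(\V)$ and so have projective dimension at most $\pd\V$.
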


The aim of this paper is to prove the following
\begin{theorem}
  Let $\Lambda$ be an $(m,n)$-Igusa-Todorov algebra.
  Then $\tridim D^{b}(\mod \Lambda) \leqslant 2m+n+1.$
  \end{theorem}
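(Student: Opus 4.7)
The plan is to produce a single object $G \in D^b(\mod \Lambda)$ and to show that every object of $D^b(\mod \Lambda)$ lies in the $(2m+n+2)$-th extension closure $\langle G \rangle_{2m+n+2}$, which by definition of the triangulated dimension gives $\tridim D^b(\mod \Lambda) \leqslant 2m+n+1$. The candidate generator should be assembled out of the witness module $V$ coming with the $(m,n)$-Igusa-Todorov data together with the relevant projective modules and their first few syzygies, something of the form $G = V \oplus \Lambda \oplus \Omega\Lambda \oplus \cdots \oplus \Omega^{m}\Lambda$.

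The first step is a standard truncation in the bounded derived category, as used in \cite[Lemma 7.35]{rouquier2008dimensions}: any bounded complex is an iterated extension of its (shifted) cohomology stalks, so it suffices to bound the extension level (with respect to $G$) of each module $M\in \mod \Lambda$ uniformly. The second step is to apply the defining short exact sequence of an $(m,n)$-Igusa-Todorov algebra to $M$, producing a short exact sequence of the form $0 \to V_1 \to V_0 \to X \to 0$ with $V_0, V_1 \in \add V$ and $X$ a controlled twist of $M$ that involves $\Omega^n M$ and projective modules arising in the first $m$ steps of a projective resolution. The associated distinguished triangle places $X$ in $\langle V\rangle_2 \subseteq \langle G\rangle_2$.

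The third step is to traverse from $X$ back to $M$ via the canonical triangles $\Omega^{i+1} M \to P_i \to \Omega^i M \to \Omega^{i+1}M[1]$ coming from a minimal projective resolution of $M$. Each such triangle inserts a summand of $\add \Lambda \subseteq \add G$ and costs one level of the extension closure; traversing $n$ such triangles accounts for the $n$-parameter, and handling the $m$ projective summands that appear in $X$ (together with the "bridge" from $\Omega^m M$ back to $M$) contributes $2m$ further levels. Combining these with the two levels from the Igusa-Todorov triangle yields the bound $2m+n+1$.

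The main obstacle will be the precise bookkeeping of the extension levels: each application of the Octahedral axiom combines two distinguished triangles into one but raises the cumulative level by one, so every short exact sequence used in the reduction must be accounted for without overcounting. A secondary subtlety is to confirm that a single $G$, not depending on $M$, absorbs the witness module $V$ together with all finitely many syzygies $\Omega^i \Lambda$ invoked in the reduction; this is exactly the uniformity built into the definition of an $(m,n)$-Igusa-Todorov algebra, and verifying that it carries through the entire argument is the heart of the proof.
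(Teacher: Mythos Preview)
Your plan has a genuine gap at the very first step. Reducing a bounded complex to its cohomology stalks does \emph{not} yield a uniform bound: if $\mathbf{X}$ has nonzero cohomology in $N$ degrees, the iterated truncation triangles express $\mathbf{X}$ as an $N$-fold extension of stalks, so even if every module lies in $\langle G\rangle_{k}$ you only conclude $\mathbf{X}\in\langle G\rangle_{kN}$. Since $N$ is unbounded over $D^{b}(\mod\Lambda)$, this does not produce any finite dimension. The paper avoids this by never reducing to individual modules: instead it builds, for a complex $\mathbf{X}$, a termwise syzygy complex $\Omega^{n}(\mathbf{X})$ (following Aihara--Takahashi) together with $n$ triangles $\Omega^{i}(\mathbf{X})\to P^{\Omega^{i-1}(\mathbf{X})}\to\Omega^{i-1}(\mathbf{X})\to\Omega^{i}(\mathbf{X})[1]$ whose middle terms are perfect; this costs exactly $n$ levels regardless of the amplitude of $\mathbf{X}$. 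Only then does it decompose $\Omega^{n}(\mathbf{X})$ via the cycle/boundary trick (Lemma~\ref{lem2.6}), which costs two levels independently of the amplitude and, crucially, produces modules $Z_{i}(\Omega^{n}(\mathbf{X}))$ and $B_{i}(\Omega^{n}(\mathbf{X}))$ that already lie in $\Omega^{n}(\mod\Lambda)$ up to projectives. Applying the $(m,n)$-Igusa--Todorov resolution of length $m+1$ to each then gives the final count $n+(m+1)+(m+1)=2m+n+2$.

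A second issue: your description of the $(m,n)$-Igusa--Todorov data is not what Definition~\ref{mnITalgebra} says. There is no short exact sequence $0\to V_{1}\to V_{0}\to X\to 0$; rather, one has a length-$(m+2)$ exact sequence $0\to V_{m}\to\cdots\to V_{0}\to\Omega^{n}(M)\to 0$ with all $V_{i}\in\add V$. This places $\Omega^{n}(M)$ in $\langle V\rangle_{m+1}$, not $\langle V\rangle_{2}$, and the parameter $m$ enters here---not through ``projective summands in $X$''. Your step-3 bookkeeping (``$2m$ further levels from projective summands'') therefore does not correspond to anything in the definition. Note also that even if you fix step~1 by using the $Z/B$ decomposition of $\mathbf{X}$ itself and then bound each resulting module, you would pay for the $n$ syzygy steps twice (once for $Z$, once for $B$), yielding only $2m+2n+1$; the complex-level syzygy construction is what saves a factor of $n$.
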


\begin{theorem} \label{thm1.2}
Let $\Lambda$ be an artin algebra and $\mathcal{V}$
a set of some simple modules in $\mod \Lambda$.
 If $\ell\ell^{t_{\V}}(\Lambda_{\Lambda})\geqslant 2$,
then $\tridim D^{b}(\mod \Lambda) \leqslant
2\ell\ell^{t_{\V}}(\Lambda_{\Lambda})+\pd\mathcal{V}-1.$
\end{theorem}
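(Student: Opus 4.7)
The plan is to deduce Theorem 1.2 directly from the preceding theorem on $(m,n)$-Igusa-Todorov algebras by choosing the right parameters. Setting $m:=\ell\ell^{t_{\V}}(\Lambda_{\Lambda})-1$ and $n:=\pd\mathcal{V}$, the hypothesis $\ell\ell^{t_{\V}}(\Lambda_{\Lambda})\geqslant 2$ guarantees $m\geqslant 1$, and a simple arithmetic check gives
$$2m+n+1=2\ell\ell^{t_{\V}}(\Lambda_{\Lambda})+\pd\mathcal{V}-1.$$
Hence it suffices to exhibit $\Lambda$ as an $(m,n)$-Igusa-Todorov algebra for this pair, after which the preceding theorem yields the stated bound on $\tridim D^{b}(\mod\Lambda)$.

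The construction of the witness module exploits the canonical $t_{\V}$-radical filtration. For any $M\in\mod\Lambda$, iterating the torsion radical $t_{\V}$ and the Jacobson radical $\rad$ produces a descending chain $M=F^{0}M\supseteq F^{1}M\supseteq\cdots\supseteq F^{L}M=0$ with $L=\ell\ell^{t_{\V}}(\Lambda_{\Lambda})$, whose successive subquotients split into two types: those belonging to $\add\mathcal{V}$, with projective dimension at most $n=\pd\mathcal{V}$, and those lying in the subcategory killed by $t_{\V}$. I would then assemble a universal witness $V=V_{0}\oplus V_{1}\oplus\cdots\oplus V_{m}$ whose summands exhaust, up to isomorphism, the finitely many possible types of subquotients appearing at each filtration level of $\Lambda_{\Lambda}$ itself (noting $\mathcal{V}$ is finite, so this is a finite direct sum).

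I would next verify the $(m,n)$-Igusa-Todorov axioms by producing, for every $M$, a chain of $m$ short exact sequences coming from the filtration steps $F^{i}M\hookrightarrow F^{i-1}M\twoheadrightarrow F^{i-1}M/F^{i}M$. The key point is that at each step the cokernel sits in $\add V$ after suitable syzygy truncation, while the $\mathcal{V}$-typed pieces contribute only modules of projective dimension $\leqslant n$. Passing to syzygies and applying the horseshoe lemma transports this filtration into a form that matches the formal definition of $(m,n)$-Igusa-Todorov recalled earlier in the paper.

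The main obstacle will be the bookkeeping in the third step: the definition of $(m,n)$-Igusa-Todorov is phrased in terms of $\Omega$-syzygies and a $K_{1}$-style vanishing condition, whereas the filtration $F^{\bullet}$ lives on the abelian-category side. Reconciling the two requires repeatedly applying the horseshoe lemma and tracking projective summands so that each syzygy step either lands in $\add V$ (absorbing an $m$-count) or in a class of modules of projective dimension at most $n$ (absorbing an $n$-count). The separation between the roles of $m$ and $n$ is exactly what turns the prior estimate $2(\pd\mathcal{V}+\ell\ell^{t_{\V}}(\Lambda))+1$ of Theorem 1.1(4) into the sharper $2\ell\ell^{t_{\V}}(\Lambda_{\Lambda})+\pd\mathcal{V}-1$.
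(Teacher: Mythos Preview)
Your reduction to the $(m,n)$--Igusa--Todorov bound is exactly what the paper does, but the paper chooses $(m,n)=(\ell\ell^{t_{\V}}(\Lambda)-2,\,\pd\V+2)$ rather than your $(\ell\ell^{t_{\V}}(\Lambda)-1,\,\pd\V)$; both give the same arithmetic, yet your pair is a strictly stronger claim than the paper's, and your sketch does not establish it.

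There are several concrete problems. First, the subquotients of the $t_{\V}$--radical filtration are misidentified: from the short exact sequences of Lemma~\ref{short-exact} the layers are $q_{t_{\V}}F^{i}(M)\in\mathfrak{F}(\V)$ (these carry the bound $\pd\leqslant\pd\V$) and $\top t_{\V}F^{i}(M)\in\add\V'$, not $\add\V$; the pieces ``killed by $t_{\V}$'' are precisely those in $\mathfrak{F}(\V)$, so your two types collapse into one. Second, the chain does not end with $F^{L}M=0$; one only has $t_{\V}F^{L}(M)=0$, i.e.\ $F^{L}(M)\in\mathfrak{F}(\V)$, and for an arbitrary $M$ even the bound $\ell\ell^{t_{\V}}(M)\leqslant\ell\ell^{t_{\V}}(\Lambda)$ requires argument. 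Third---and this is the main gap---a filtration of $\Omega^{n}(M)$ is not the same as the resolution $0\to V_{m}\to\cdots\to V_{0}\to\Omega^{n}(M)\to 0$ demanded by Definition~\ref{mnITalgebra}. The paper bridges this via Lemma~\ref{syzygyshift} (the syzygy--shift lemma) together with an iterated pushout construction, which is the entire technical content of Theorem~\ref{maintheorem2}; your ``horseshoe lemma and bookkeeping'' remark does not substitute for this. Finally, there is no ``$K_{1}$--style vanishing condition'' in the definition; you may be conflating this with the original Igusa--Todorov function.

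The paper avoids these difficulties by first replacing $M$ with $N=\Omega t_{\V}(M)$ and invoking Lemma~\ref{lem-3.17} to obtain $\ell\ell^{t_{\V}}(N)\leqslant\ell\ell^{t_{\V}}(\Lambda)-1$, which both controls the filtration length uniformly and accounts for the shift to syzygy degree $\pd\V+2$; the resolution is then built explicitly from the exact sequences~(\ref{eq6}) via the pushout diagrams~(\ref{diagram1}),~(\ref{diagram2}) and Lemma~\ref{syzygyshift}, with witness module $\Lambda\oplus\bigoplus_{i=\pd\V+1}^{\pd\V+\ell\ell^{t_{\V}}(\Lambda)-1}\Omega^{i}(\Lambda/\rad\Lambda)$.
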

By Theorem \ref{thm1.1}(5) and Theorem \ref{thm1.2}, we have
\begin{corollary}
Let $\Lambda$ be an artin algebra and $\mathcal{V}$
a set of some simple modules in $\mod \Lambda$.
Then $\tridim D^{b}(\mod \Lambda) \leqslant \max\{
2\ell\ell^{t_{\V}}(\Lambda_{\Lambda})+\pd\mathcal{V}-1,\pd\mathcal{V}+3\}.$
\end{corollary}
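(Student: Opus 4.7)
The plan is to prove the corollary by a direct case analysis on the value of the radical layer length $\ell\ell^{t_{\V}}(\Lambda_{\Lambda})$, which allows us to invoke exactly one of Theorem \ref{thm1.1}(5) or Theorem \ref{thm1.2} in each case. Since the hypothesis of Theorem \ref{thm1.1}(5) is $\ell\ell^{t_{\V}}(\Lambda_{\Lambda}) \leqslant 2$ while the hypothesis of Theorem \ref{thm1.2} is $\ell\ell^{t_{\V}}(\Lambda_{\Lambda}) \geqslant 2$, the two ranges together cover all possibilities (with a harmless overlap at $\ell\ell^{t_{\V}}(\Lambda_{\Lambda})=2$).

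First I would handle the case $\ell\ell^{t_{\V}}(\Lambda_{\Lambda}) \leqslant 2$. Here Theorem \ref{thm1.1}(5) applies directly and yields $\tridim D^{b}(\mod \Lambda) \leqslant \pd\mathcal{V}+3$, which is clearly bounded above by $\max\{2\ell\ell^{t_{\V}}(\Lambda_{\Lambda})+\pd\mathcal{V}-1,\;\pd\mathcal{V}+3\}$.

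Next I would handle the case $\ell\ell^{t_{\V}}(\Lambda_{\Lambda}) \geqslant 2$. Here Theorem \ref{thm1.2} applies and yields $\tridim D^{b}(\mod \Lambda) \leqslant 2\ell\ell^{t_{\V}}(\Lambda_{\Lambda})+\pd\mathcal{V}-1$, which is again bounded above by the same maximum. Combining the two cases gives the desired inequality.

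There is essentially no obstacle here; the corollary is a formal consequence of the two cited results, and the only thing to verify is that the two hypothesis ranges exhaust all possible values of $\ell\ell^{t_{\V}}(\Lambda_{\Lambda})$. The mildest subtlety is to note that the overlap at $\ell\ell^{t_{\V}}(\Lambda_{\Lambda})=2$ is consistent: both theorems apply, and the stated maximum absorbs either bound.
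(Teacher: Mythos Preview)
Your proposal is correct and follows exactly the approach the paper itself indicates: the paper states that the corollary follows ``By Theorem \ref{thm1.1}(5) and Theorem \ref{thm1.2}'' without writing out any further details, and your case analysis on whether $\ell\ell^{t_{\V}}(\Lambda_{\Lambda}) \leqslant 2$ or $\ell\ell^{t_{\V}}(\Lambda_{\Lambda}) \geqslant 2$ is precisely how one combines those two results.
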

    We also give examples to explain
     our results. Sometimes,
     we may be able to get a better upper
     bound for the dimension of
   the bounded derived category of $\Lambda$.
\begin{corollary}\label{cor1}
If artin algebra $\Lambda$ is $n$-Igusa-Todorov algebra.
 Then
$\tridim D^{b}(\mod \Lambda) \leqslant n+3.$
\end{corollary}

  \section{Preliminaries}

  \subsection{$(m,n)$-Igusa-Todorov algebras}
  In order to illustrate our main results, we introduce the following definition.
  \begin{definition}\label{mnITalgebra}
    {\rm
    For two nonnegative integers $m, n$. The artin algebra $\Lambda$ is said
    to be $(m,n)$-Igusa-Todorov algebra if there is a module
     $V\in \mod \Lambda$
    such that for any module $M$ there exists an exact sequence
    $$0
    \longrightarrow V_{m}
    \longrightarrow V_{m-1}
    \longrightarrow
    \cdots
    \longrightarrow V_{1}
    \longrightarrow V_{0}
    \longrightarrow \Omega^{n}(M)
    \longrightarrow 0
    $$
  where $V_{i} \in [V]_{1}$ for each $0 \leqslant  i \leqslant m $.
  Such a module $V$ is said to be $(m,n)$-Igusa-Todorov module.
  }
  \end{definition}
  \begin{remark}\label{mnITremark}
    {\rm
    $(1)$ By Definition \ref{mnITalgebra}, we know that $(1,n)$-Igusa-Todorov algebras is the same as $n$-Igusa-Todorov
    algebras (see \cite{wei2009finitistic}).

    $(2)$ $(0,n)$-Igusa-Todorov algebras is the same as $n$-syzygy-finite
    algebras (see \cite{wei2009finitistic}).

    $(3)$ $(m,n)$-Igusa-Todorov algebras are $(m+i,n-i)$-Igusa-Todorov algebras,
    also are $(m,n+i)$-Igusa-Todorov algebras,
    where $(n-i)$ is non-negative.

    $(4)$ If $\gldim \Lambda <\infty$, then $\Lambda$ is
    $(\gldim \Lambda, 0)$-Igusa-Todorov algebra.

    $(5)$ Set $\LL(\Lambda)=n\geqslant 2$. By \cite[Lemma 3.1]{zheng_huang_2020finitistic},
    we konw that, for each $M\in \mod \Lambda$,
    we have the following exact sequence
    $$0\rightarrow M_{n-1}\rightarrow M_{n-2}\rightarrow
     \cdots \rightarrow M_{1}\rightarrow M_{0}\rightarrow M\rightarrow 0$$
    be an exact sequence in $\mod\Lambda$,
    where $M_{i}\in \add (\Lambda / \rad^{n-i}\Lambda)$.
    Then $\Lambda$ is
    $(\LL(\Lambda)-1, 0)$-Igusa-Todorov algebra by Definition \ref{mnITalgebra}.
Since $M_{0}$ is projective, we can get that $\Lambda$ is
    $(\LL(\Lambda)-2, 1)$-Igusa-Todorov algebra. Then $\Lambda$ is
    $(\LL(\Lambda)-2, 2)$-Igusa-Todorov algebra by Remark \ref{mnITremark}(3).


    }

  \end{remark}

  \subsection{The dimension of triangulated category}
  We recall some notions from
  \cite{rouquier2006representation,rouquier2008dimensions,
  oppermann2009lower}.
  Let $\T$ be a triangulated category and $\I \subseteq {\rm Ob}\T$.
  Let $\langle \I \rangle_{1}$ be the full subcategory
  consisting of $\T$
  of all direct summands of finite direct sums of shifts of
  objects in $\I$.
  Given two subclasses $\I_{1}, \I_{2}\subseteq {\rm Ob}\T$,
  we denote $\I_{1}*\I_{2}$
  by the full subcategory of all extensions between them, that is,
  $$\I_{1}*\I_{2}=\{ X\mid  X_{1} \longrightarrow X
  \longrightarrow X_{2}\longrightarrow X_{1}[1]\;
  {\rm with}\; X_{1}\in \I_{1}\; {\rm and}\; X_{2}\in \I_{2}\}.$$
  Write $\I_{1}\diamond\I_{2}:=\langle\I_{1}*\I_{2} \rangle_{1}.$
  Then $(\I_{1}\diamond\I_{2})\diamond\I_{3}=\I_{1}
  \diamond(\I_{2}\diamond\I_{3})$
  for any subclasses $\I_{1}, \I_{2}$ and $\I_{3}$
  of $\T$ by the octahedral axiom.
  Write
  \begin{align*}
  \langle \I \rangle_{0}:=0,\;
  \langle \I \rangle_{n+1}:=\langle \I
  \rangle_{n}\diamond\langle \I \rangle_{1}\;{\rm for\; any \;}
  n\geqslant 1.
  \end{align*}

  \begin{definition}{\rm
    (\cite[Definiton 3.2]{rouquier2006representation})\label{tri.dimenson2.1}
  The {\bf dimension} $\tridim \T$ of a triangulated category $\T$
  is the minimal $d$ such that there exists an object $M\in \T$ with
  $\T=\langle M \rangle_{d+1}$. If no such $M$ exists for any $d$,
  then we set $\tridim \T=\infty.$
  }
  \end{definition}

  \begin{lemma}{\rm (\cite[Lemma 7.3]{psaroudakis2014homological})}\label{lem2.5}
  Let $\T$ be a triangulated category and let $X, Y$ be
   two objects of $\T$.
  Then
  $$\langle X \rangle _{m}\diamond \langle Y \rangle _{n}
  \subseteq \langle X\oplus Y \rangle _{m+n}$$
  for any $m,n \geqslant 0$.
  \end{lemma}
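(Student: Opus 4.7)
The plan is to reduce the inclusion to a single-object identity and then induct on one of the indices. First, since $X$ and $Y$ are direct summands of $X\oplus Y$, every shift of $X$ or of $Y$ belongs to $\langle X\oplus Y\rangle_{1}$; because $\langle -\rangle_{1}$ is closed under shifts, finite direct sums, and direct summands, this gives
$$\langle X\rangle_{1}\subseteq \langle X\oplus Y\rangle_{1}, \qquad \langle Y\rangle_{1}\subseteq \langle X\oplus Y\rangle_{1}.$$
An induction on $k$, using the recursion $\langle \mathcal{I}\rangle_{k+1}=\langle \mathcal{I}\rangle_{k}\diamond \langle \mathcal{I}\rangle_{1}$ together with two-sided monotonicity of the operation $\diamond$, then yields $\langle X\rangle_{k}\subseteq \langle X\oplus Y\rangle_{k}$ and $\langle Y\rangle_{k}\subseteq \langle X\oplus Y\rangle_{k}$ for every $k\geqslant 0$. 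Combining these, monotonicity of $\diamond$ gives $\langle X\rangle_{m}\diamond \langle Y\rangle_{n}\subseteq \langle X\oplus Y\rangle_{m}\diamond \langle X\oplus Y\rangle_{n}$, and the lemma is reduced to the single-object statement $\langle Z\rangle_{m}\diamond \langle Z\rangle_{n}\subseteq \langle Z\rangle_{m+n}$ (applied to $Z=X\oplus Y$).

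For the single-object identity I would induct on $n$. The base $n=0$ is trivial since $\langle Z\rangle_{0}=0$, and $n=1$ is the defining recursion. For the inductive step, the associativity $(\mathcal{I}_{1}\diamond \mathcal{I}_{2})\diamond \mathcal{I}_{3}=\mathcal{I}_{1}\diamond (\mathcal{I}_{2}\diamond \mathcal{I}_{3})$ recorded in the excerpt (a consequence of the octahedral axiom) gives
$$\langle Z\rangle_{m}\diamond \langle Z\rangle_{n+1}=\langle Z\rangle_{m}\diamond (\langle Z\rangle_{n}\diamond \langle Z\rangle_{1})=(\langle Z\rangle_{m}\diamond \langle Z\rangle_{n})\diamond \langle Z\rangle_{1}\subseteq \langle Z\rangle_{m+n}\diamond \langle Z\rangle_{1}=\langle Z\rangle_{m+n+1},$$
where the inclusion uses the induction hypothesis and monotonicity of $\diamond$ in its first argument.

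The only genuine piece of bookkeeping, and the step most prone to slips, is verifying monotonicity of the operation $\diamond=\langle -*-\rangle_{1}$: if $\mathcal{I}\subseteq \mathcal{I}'$, then any triangle $X_{1}\to X\to X_{2}\to X_{1}[1]$ with $X_{1}\in \mathcal{I}$ is also one with $X_{1}\in \mathcal{I}'$, so $\mathcal{I}*\mathcal{J}\subseteq \mathcal{I}'*\mathcal{J}$, and applying $\langle -\rangle_{1}$ preserves this; monotonicity in the second argument is symmetric. Once this formal check and the correct reading of $\langle -\rangle_{0}$ as the zero class are in place, the proof uses no structural feature of $\T$ beyond the associativity already recalled, so the argument is a short bookkeeping induction rather than a substantive use of the triangulated structure.
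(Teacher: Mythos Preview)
The paper does not supply a proof of this lemma; it is quoted verbatim from \cite[Lemma 7.3]{psaroudakis2014homological} and used as a black box. Your argument is correct and is essentially the standard proof: reduce to the single-object inequality $\langle Z\rangle_{m}\diamond\langle Z\rangle_{n}\subseteq\langle Z\rangle_{m+n}$ via the obvious inclusions $\langle X\rangle_{k},\langle Y\rangle_{k}\subseteq\langle X\oplus Y\rangle_{k}$, and then establish that inequality by induction on $n$ using the associativity of $\diamond$ (which the paper records as a consequence of the octahedral axiom). The monotonicity check you include is exactly what is needed to make the induction go through, and nothing is missing.
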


  \section{The dimension of the bounded derived category of $(m,n)$-Igusa-Todorov algebras}

  Let $M\in \mod \Lambda$
   and an integer $i \in \mathbb{Z}$. We
  let $S^{i}(M)$ denote the stalk complex with $M$
  in the $i$th place and $0$ in the other places.
  From the proof of \cite[Theorem]{han2009derived},
  we have the following lemma.
  \begin{lemma}\label{lem2.6}{\rm (\cite{zheng2020thedimension})}
  For any bounded complex $\textbf{X}$ over $\mod \Lambda$, we have
  $$\textbf{X}\in \langle\oplus_{i\in \mathbb{Z}} S^{0}(Z_{i}(\textbf{X}))
  \rangle _{1}\diamond
  \langle\oplus_{i\in \mathbb{Z}}S^{0}(B_{i}(\textbf{X})) \rangle _{1}$$
  in $D^{b}(\mod \Lambda)$,
  and $\oplus_{i\in \mathbb{Z}} S^{0}(Z_{i}(\textbf{X}))$ and
  $\oplus_{i\in \mathbb{Z}}S^{0}(B_{i}(X)) $ have only
   finitely many nonzero summands.
  \end{lemma}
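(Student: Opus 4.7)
The plan is to realize $\textbf{X}$ as a two-step extension in $D^b(\mod \Lambda)$ obtained from the standard filtration of a chain complex by its subcomplex of cycles. This reduces the statement to the observation that both factors of the filtration carry zero differential and therefore split, in the derived category, into direct sums of shifted stalk complexes concentrated at the $Z_i(\textbf{X})$'s or the $B_i(\textbf{X})$'s.

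First I would form the short exact sequence of complexes
\[
0 \longrightarrow \textbf{Z} \longrightarrow \textbf{X} \longrightarrow \textbf{X}/\textbf{Z} \longrightarrow 0,
\]
where $\textbf{Z}$ is the subcomplex of cycles, i.e., $\textbf{Z}_i = Z_i(\textbf{X})$ with zero differential. Since $X_i/Z_i(\textbf{X}) \cong \im(d_i) = B_{i-1}(\textbf{X})$ and the differential induced on the quotient is zero, $\textbf{X}/\textbf{Z}$ has term $B_{i-1}(\textbf{X})$ in degree $i$ and all differentials zero. Consequently, in $D^b(\mod \Lambda)$ both complexes break up as finite direct sums of shifted stalks,
\[
\textbf{Z} \cong \oplus_{i\in\mathbb{Z}} S^{i}(Z_i(\textbf{X})), \qquad \textbf{X}/\textbf{Z} \cong \oplus_{i\in\mathbb{Z}} S^{i-1}(B_{i-1}(\textbf{X})),
\]
with boundedness of $\textbf{X}$ making only finitely many summands nonzero. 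Since each shifted stalk $S^{i}(Z_i(\textbf{X}))$ is a shift and a direct summand of $\oplus_{j} S^{0}(Z_j(\textbf{X}))$, I conclude $\textbf{Z} \in \langle \oplus_{i} S^{0}(Z_i(\textbf{X})) \rangle_{1}$, and analogously $\textbf{X}/\textbf{Z} \in \langle \oplus_{i} S^{0}(B_i(\textbf{X})) \rangle_{1}$.

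Finally, the short exact sequence of complexes above yields a distinguished triangle $\textbf{Z} \to \textbf{X} \to \textbf{X}/\textbf{Z} \to \textbf{Z}[1]$ in $D^b(\mod \Lambda)$, which places $\textbf{X}$ in the extension class
\[
\langle \oplus_{i\in\mathbb{Z}} S^{0}(Z_i(\textbf{X})) \rangle_{1} \diamond \langle \oplus_{i\in\mathbb{Z}} S^{0}(B_i(\textbf{X})) \rangle_{1}
\]
as required. I do not expect a substantive obstacle beyond this; the only point of care is the bookkeeping of degree conventions when identifying $\textbf{Z}$ and $\textbf{X}/\textbf{Z}$ with direct sums of shifted stalks, so that the shifts in the two summands line up consistently with the orientation of the triangle.
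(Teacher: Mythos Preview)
Your argument is correct and is precisely the standard cycle-filtration argument. Note that the paper does not actually supply its own proof of this lemma: it merely records the statement, attributing it to \cite{zheng2020thedimension} and to the proof of \cite[Theorem]{han2009derived}. The approach you wrote out is exactly the one underlying those references, so there is nothing to compare beyond saying your proof matches the intended (but omitted) argument.
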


  \begin{lemma}\label{longexactsequence1}
  If we have the following exact sequence
  $$0
    \rightarrow X_{n}
    \rightarrow X_{n-1}
    \rightarrow
    \cdots
    \rightarrow X_{1}
    \rightarrow X_{0}
    \rightarrow X_{-1}
    \rightarrow 0
    $$
    in $\mod \Lambda$, then
    $\langle S^{0}(X_{-1}) \rangle_{1}
  \subseteq
  \langle \oplus_{i=0}^{n}S^{0}(X_{i}) \rangle_{n+1}.$
  \end{lemma}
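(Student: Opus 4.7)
The plan is to argue by induction on $n$, splicing the long exact sequence into short exact sequences, converting each to a distinguished triangle in $D^b(\mod \Lambda)$, and repeatedly applying Lemma \ref{lem2.5}.

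For the base case $n=0$, the sequence collapses to $0\to X_0\to X_{-1}\to 0$, so $X_0\cong X_{-1}$ and the containment $\langle S^{0}(X_{-1})\rangle_1 \subseteq \langle S^{0}(X_0)\rangle_1$ is trivial. For the inductive step, let $K=\Ker(X_0\to X_{-1})=\Ima(X_1\to X_0)$ and split the given sequence into
$$0\to K\to X_0\to X_{-1}\to 0 \quad \text{and} \quad 0\to X_n\to \cdots\to X_1\to K\to 0.$$
The first short exact sequence yields a distinguished triangle $S^{0}(K)\to S^{0}(X_0)\to S^{0}(X_{-1})\to S^{0}(K)[1]$ in $D^b(\mod \Lambda)$, which after rotation places $S^{0}(X_{-1})$ in $\langle S^{0}(X_0)\rangle_1 * \langle S^{0}(K)\rangle_1$. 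Taking closures gives
$$\langle S^{0}(X_{-1})\rangle_1 \subseteq \langle S^{0}(X_0)\rangle_1 \diamond \langle S^{0}(K)\rangle_1.$$

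The second sequence has length one less than the original (after relabeling $X_{i+1}$ as the $i$-th term and $K$ as the $(-1)$-st term), so the induction hypothesis yields
$$\langle S^{0}(K)\rangle_1 \subseteq \Bigl\langle \textstyle\bigoplus_{i=1}^{n} S^{0}(X_i)\Bigr\rangle_{n}.$$
Substituting this into the previous containment and invoking Lemma \ref{lem2.5} to convert the $\diamond$ of two single-object subcategories into a single one produces
$$\langle S^{0}(X_{-1})\rangle_1 \subseteq \langle S^{0}(X_0)\rangle_1 \diamond \Bigl\langle \textstyle\bigoplus_{i=1}^{n} S^{0}(X_i)\Bigr\rangle_{n} \subseteq \Bigl\langle \textstyle\bigoplus_{i=0}^{n} S^{0}(X_i)\Bigr\rangle_{n+1},$$
which is the desired inclusion.

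There is no serious obstacle here: the argument is entirely formal, and the only point that requires a little care is bookkeeping of the indices when applying the induction hypothesis to the truncated sequence ending in $K$, together with the direction of the rotation of the triangle (to ensure that $S^{0}(X_{-1})$, rather than $S^{0}(K)$, is being expressed as an extension).
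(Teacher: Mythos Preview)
Your proof is correct and follows essentially the same approach as the paper: both splice the long exact sequence into short exact sequences $0\to K_{i+1}\to X_i\to K_i\to 0$, convert these to triangles in $D^b(\mod\Lambda)$, observe that $\langle S^0(K_i)\rangle_1\subseteq \langle S^0(X_i)\rangle_1\diamond\langle S^0(K_{i+1})\rangle_1$, and then assemble the result via Lemma~\ref{lem2.5}. The only cosmetic difference is that you phrase it as an induction on $n$ while the paper writes out the chain of inclusions directly.
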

  \begin{proof}
  Set $K_{i}:=\Ker (X_{i-1}\rightarrow X_{i-2})$
  for each $1 \leqslant  i \leqslant n $, and $K_{0}:=X_{-1}$.
  Now, for each $0\leqslant i \leqslant n$,
  we can get the following short exact sequences
  $$0 \rightarrow K_{i+1}  \rightarrow  X_{i}\rightarrow  K_{i}\rightarrow 0;$$
 moreover, we have the following triangles in $D^{b}(\mod \Lambda)$
  $$S^{0}(K_{i+1})  \rightarrow  S^{0}(X_{i})\rightarrow  S^{0}(K_{i})\rightarrow S^{0}(K_{i+1})[1];$$
  and then, we have
  $$ \langle S^{0}(K_{i}) \rangle_{1}
  \subseteq \langle S^{0}(X_{i}) \rangle_{1} \diamond \langle S^{0}(K_{i+1})[1] \rangle_{1}
  = \langle S^{0}(X_{i}) \rangle_{1} \diamond \langle S^{0}(K_{i+1}) \rangle_{1}.
  $$
  And by Lemma \ref{lem2.5} , we get
  $$\langle S^{0}(X_{-1}) \rangle_{1}
  \subseteq
  \langle \oplus_{i=0}^{n}S^{0}(X_{i}) \rangle_{n+1} .
  $$
  \end{proof}

  \begin{lemma}{\rm (\cite[Lemma 3.1]{aihara2015generators})}
  \label{bound_complex_X}
  Let $\textbf{X}$ be a bound complex of objects of $\mod \Lambda$.
Suppose that the homology $H_{i}(\textbf{X})$ is a projective module for every integer $i$. Then
$\textbf{X} \in \langle S^{0}(\Lambda)\rangle_{1}$.
  \end{lemma}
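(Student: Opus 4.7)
The plan is to show that $\textbf{X}$ is isomorphic in $D^{b}(\mod\Lambda)$ to its ``formal'' complex $\textbf{H}:=\bigoplus_{i\in\mathbb{Z}} S^{i}(H_{i}(\textbf{X}))$, viewed as a complex with zero differential. Once this is done, the conclusion is immediate: since $\textbf{X}$ is bounded, only finitely many $H_{i}(\textbf{X})$ are nonzero, and each such $H_{i}(\textbf{X})$ is projective, hence a direct summand of some $\Lambda^{n_{i}}$. Consequently $\textbf{H}$ is a direct summand of a finite direct sum of shifts of $S^{0}(\Lambda)$, which is exactly what it means to lie in $\langle S^{0}(\Lambda)\rangle_{1}$.

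To produce the required isomorphism in $D^{b}(\mod\Lambda)$, I would construct an explicit quasi-isomorphism $f\colon \textbf{H}\to \textbf{X}$. For every $i$, the canonical exact sequence
\[
0\longrightarrow B_{i}(\textbf{X})\longrightarrow Z_{i}(\textbf{X})\longrightarrow H_{i}(\textbf{X})\longrightarrow 0
\]
splits because $H_{i}(\textbf{X})$ is projective; in particular the identity of $H_{i}(\textbf{X})$ lifts to a section $s_{i}\colon H_{i}(\textbf{X})\to Z_{i}(\textbf{X})$. Composing with the inclusion $Z_{i}(\textbf{X})\hookrightarrow X_{i}$ yields a morphism $f_{i}\colon H_{i}(\textbf{X})\to X_{i}$. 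Since the image of $f_{i}$ lies in $\ker d_{i}$, the collection $\{f_{i}\}$ is automatically a chain map from the zero-differential complex $\textbf{H}$ to $\textbf{X}$.

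The next step is to verify that $f$ is a quasi-isomorphism. By construction, the composition $H_{i}(\textbf{X})\xrightarrow{f_{i}} Z_{i}(\textbf{X})\twoheadrightarrow Z_{i}(\textbf{X})/B_{i}(\textbf{X})=H_{i}(\textbf{X})$ is the identity, so $H_{i}(f)=\Id_{H_{i}(\textbf{X})}$ for every $i$; thus $f$ is a quasi-isomorphism and $\textbf{X}\cong \textbf{H}$ in $D^{b}(\mod\Lambda)$.

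I do not foresee any serious obstacle here: the argument rests entirely on the two standard facts that a short exact sequence ending in a projective splits, and that a chain map inducing isomorphisms on homology is invertible in the derived category. The only thing to watch is bookkeeping — ensuring that $\textbf{H}$ really has only finitely many nonzero summands (which follows from boundedness of $\textbf{X}$) so that the final containment in $\langle S^{0}(\Lambda)\rangle_{1}$ uses only finite direct sums of shifts of $S^{0}(\Lambda)$.
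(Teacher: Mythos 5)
Your proof is correct, and since the paper only cites this lemma from Aihara--Takahashi without reproducing an argument, there is no internal proof to compare against; your argument is the standard one that one would expect (and that underlies the cited Lemma 3.1): a complex whose homology modules are projective splits in the derived category as the direct sum of its shifted homology modules, and boundedness plus projectivity of each $H_{i}(\textbf{X})$ then places it in $\langle S^{0}(\Lambda)\rangle_{1}$. The only point worth making fully explicit, which you handle correctly, is the chain-map verification: the differential on $\textbf{H}$ is zero, so the condition reduces to $d_{i}^{\textbf{X}}\circ f_{i}=0$, which holds because $\Ima f_{i}\subseteq Z_{i}(\textbf{X})=\Ker d_{i}^{\textbf{X}}$.
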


  \begin{theorem}\label{maintheorem1}
  Let $\Lambda$ be an $(m,n)$-Igusa-Todorov algebra.
  Then $\tridim D^{b}(\mod \Lambda) \leqslant 2m+n+1.$
  \end{theorem}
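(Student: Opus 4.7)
The plan is to set $T = S^0(\Lambda \oplus V)$ and to prove $D^b(\mod \Lambda) = \langle T \rangle_{2m+n+2}$ by combining a module-level bound from the $(m,n)$-Igusa-Todorov decomposition with Lemma \ref{lem2.6} and Lemma \ref{bound_complex_X}.

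First I would establish the module-level bound $\langle S^0(M) \rangle_1 \subseteq \langle T \rangle_{m+n+1}$ for every $M \in \mod \Lambda$. To this end I splice a length-$n$ truncated projective resolution
$$0 \to \Omega^{n}(M) \to P_{n-1} \to \cdots \to P_0 \to M \to 0$$
with the defining $(m,n)$-Igusa-Todorov exact sequence
$$0 \to V_m \to V_{m-1} \to \cdots \to V_0 \to \Omega^{n}(M) \to 0$$
to obtain a single exact sequence of length $m+n+2$ whose middle $m+n+1$ terms all lie in $\add(\Lambda \oplus V)$. Lemma \ref{longexactsequence1} applied with ``$n$'' there equal to $m+n$ then yields the desired module inclusion.

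Second, for any $\textbf{X} \in D^b(\mod \Lambda)$, Lemma \ref{lem2.6} gives
$$\textbf{X} \in \langle S^0(M_Z) \rangle_1 \diamond \langle S^0(M_B) \rangle_1,$$
where $M_Z := \oplus_i Z_i(\textbf{X})$ and $M_B := \oplus_i B_i(\textbf{X})$ are genuine modules (finite direct sums of cycles and boundaries of the bounded complex $\textbf{X}$). Feeding the module-level bound into each of the two factors and invoking Lemma \ref{lem2.5} naively produces $\textbf{X} \in \langle T \rangle_{2(m+n+1)}$, giving only $\tridim D^b(\mod \Lambda) \leqslant 2m+2n+1$.

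The main obstacle, and what sharpens the bound to $2m+n+1$, is to avoid the double-counting of the projective-resolution portion on the two sides of the Lemma \ref{lem2.6} diamond. Each application of the module bound implicitly uses $n$ consecutive extensions by shifts of $S^0(\Lambda)$ coming from the syzygy expansion. Since Lemma \ref{bound_complex_X} collapses any bounded complex with projective homology into $\langle S^0(\Lambda) \rangle_1$, the plan is to repackage the projective layers contributed by the $M_Z$-side and the $M_B$-side into a single bounded projective complex and then to absorb it in one step instead of two. Making this reshuffling rigorous is the technical heart of the argument; once done, the naive width $2(m+n+1)$ drops by $n$ to $2m+n+2$, and the claimed bound $\tridim D^b(\mod \Lambda) \leqslant 2m+n+1$ follows.
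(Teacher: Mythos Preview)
Your outline correctly identifies the generator $T = S^0(\Lambda \oplus V)$, the target width $2m+n+2$, and even pinpoints exactly where the naive argument overcounts by $n$. The gap is in your final paragraph: the ``repackaging'' of the projective layers from the $M_Z$-side and the $M_B$-side into a single object of $\langle S^0(\Lambda)\rangle_1$ is asserted but not carried out, and it is not clear that it can be. Once you have applied Lemma~\ref{lem2.6} and split $\textbf{X}$ into two stalk-complex pieces, the projective resolutions of $M_Z$ and $M_B$ sit on opposite sides of a $\diamond$; since $\diamond$ is associative but not commutative, you cannot simply slide the $\langle S^0(\Lambda)\rangle_1$ factors next to each other and merge them. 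Lemma~\ref{bound_complex_X} applies to a single bounded complex with projective homology sitting in a triangle with $\textbf{X}$, and no such complex has been produced.

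The paper avoids this difficulty by reversing the order of the two reductions. Instead of decomposing $\textbf{X}$ via Lemma~\ref{lem2.6} first and then resolving, it first forms the \emph{complex-level} syzygy $\Omega^n(\textbf{X})$ via the Aihara--Takahashi construction, obtaining triangles
\[
\Omega^{j}(\textbf{X}) \longrightarrow P^{\Omega^{j-1}(\textbf{X})} \longrightarrow \Omega^{j-1}(\textbf{X}) \longrightarrow \Omega^{j}(\textbf{X})[1]
\]
in which each $P^{\Omega^{j-1}(\textbf{X})}$ is a genuine bounded complex with projective homology, hence lies in $\langle S^0(\Lambda)\rangle_1$ by Lemma~\ref{bound_complex_X}. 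This uses the $n$ projective steps \emph{once}, for the whole complex, giving $\langle\textbf{X}\rangle_1 \subseteq \langle S^0(\Lambda)\rangle_n \diamond \langle \Omega^n(\textbf{X})\rangle_1$. Only then is Lemma~\ref{lem2.6} applied, to $\Omega^n(\textbf{X})$; the point is that the construction guarantees $Z_i(\Omega^n(\textbf{X})) \cong \Omega^n(Z_i(\textbf{X}))\oplus Q_i$ and $B_i(\Omega^n(\textbf{X})) \cong \Omega^n(B_i(\textbf{X}))$ up to projectives, so these are already $n$-th syzygies and the $(m,n)$-Igusa--Todorov resolution applies directly, contributing $\langle S^0(V)\rangle_{m+1}$ on each side. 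The missing idea in your proposal is precisely this compatibility between the complex-level syzygy and the cycle/boundary decomposition.
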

  \begin{proof}
  Take a bounded complex $\textbf{X}$ in $D^{b}(\mod \Lambda)$.
  By the construction in the proof of \cite[Proposition 3.2]{aihara2015generators}
   and \cite[Lemma 2.5]{krause2006rouquier}, we have the
   following complex
  \[\xymatrix{
  \Omega(\textbf{X}):=&\cdots \ar[r]&\Omega(X_{i+1})\oplus R_{i+1}\ar[r]\ar@{^{(}->}[d]&\Omega(X_{i})\oplus R_{i} \ar[r]\ar@{^{(}->}[d]&\Omega(X_{i-1})\oplus R_{i-1} \ar@{^{(}->}[d]\ar[r]&\cdots\\
  P^{\textbf{X}}:=&\cdots \ar[r]&P^{X_{i+1}}\ar[r]\ar@{->>}[d] &  P^{X_{i}}\ar[r]\ar@{->>}[d] &P^{X_{i-1}}\ar[r]\ar@{->>}[d] &\cdots \\
  \textbf{X}:=&\cdots \ar[r]&X_{i+1} \ar[r] &X_{i}  \ar[r]         &X_{i-1}\ar[r]&\cdots\\
   }
  \]
  where all $P^{X_{j}},R_{j}$ are projective.
  Now consider the above construction, where all $R_{i}$ are projective.
  Observe the following commutative diagrams, where
   all $Q_{i}$ are projective
  \begin{center}
  \[\xymatrix{
  \Omega(X_{i+1})\oplus R_{i+1}\ar[rrrr]\ar@{^{(}->}[dd]\ar@{->>}[rd]&&&&\Omega(X_{i})\oplus R_{i} \ar@{^{(}->}[dd]\\
  &\Omega(B_{i}(\textbf{X}))\ar@{^{(}->}[r]\ar@{^{(}->}[dd]&\Omega(Z_{i}(\textbf{X}))\oplus Q_{i}\ar@{^{(}->}[rru]\ar@{->>}[r]\ar@{^{(}->}[dd]& \Omega(H_{i}(\textbf{X}))\ar@{^{(}->}[dd]  & &  &  & &  \\
  P^{X_{i+1}}\ar[rrrr]\ar@{->>}[rd]\ar@{->>}[dd]
    &        &  &  &  P^{X_{i}}\ar@{->>}[dd]  \\
              &P^{B_{i}(\textbf{X})}\ar@{^{(}->}[r]\ar@{->>}[dd]&P^{Z_{i}(\textbf{X})}\oplus Q_{i}\ar@{^{(}->}[rru]\ar@{->>}[r]\ar@{->>}[dd] & P^{H_{i}(\textbf{X})}\ar@{->>}[dd]&    & \\
  X_{i+1} \ar[rrrr]\ar@{->>}[rd]
              &               &   &          &X_{i}\\
              &B_{i}(\textbf{X})\ar@{^{(}->}[r]                &Z_{i}(\textbf{X})\ar@{->>}[r]\ar@{^{(}->}[rru]&H_{i}(\textbf{X})         \\
   }
  \]
  \end{center}
   we can find that
  $$\Im (\Omega(X_{i+1})\oplus R_{i+1}\rightarrow \Omega(X_{i})\oplus R_{i})=\Omega(B_{i}(\textbf{X}))\in \Omega(\mod \Lambda) ,$$
  and
  $$\Ker(\Omega(X_{i+1})\oplus R_{i+1}\rightarrow \Omega(X_{i})\oplus R_{i})=\Omega(Z_{i+1}(\textbf{X}))\oplus Q_{i+1}\in \Omega(\mod \Lambda) .$$
  Inductively, we can
  get the following triangles in $D^{b}(\mod \Lambda)$
  $$\Omega(\textbf{X}) \longrightarrow P^{\textbf{X}}  \longrightarrow \textbf{X} \longrightarrow \Omega(\textbf{X})[1]$$
  $$\Omega^{2}(\textbf{X}) \longrightarrow P^{\Omega(\textbf{X})}\longrightarrow \Omega(\textbf{X}) \longrightarrow \Omega^{2}(\textbf{X})[1]$$
  $$\vdots$$
  $$\Omega^{n}(\textbf{X}) \longrightarrow P^{\Omega^{n-1}(\textbf{X})}  \longrightarrow \Omega^{n-1}(\textbf{X}) \longrightarrow \Omega^{n}(\textbf{X})[1]$$
  Since all $H_{i}(P^{\Omega^{j}(\textbf{X})})(i\in \mathbb{Z}, 0 \leqslant j \leqslant n-1)$ are projective, we know that
  $P^{\Omega^{j}(\textbf{X})}\in \langle S^{0}(\Lambda) \rangle_{1}$ by Lemma \ref{bound_complex_X}.
  Hence, we have
  \begin{align*}
   \langle \textbf{X}\rangle_{1}\subseteq  \langle  P^{\textbf{X}} \rangle_{1}\diamond \langle \Omega(\textbf{X})[1]\rangle_{1}
  \subseteq \langle  S^{0}(\Lambda) \rangle_{1}\diamond \langle \Omega(\textbf{X})\rangle_{1}.
  \end{align*}
  Repeating this process, we can get
  \begin{align*}
   \langle \Omega^{i}(\textbf{X})\rangle_{1}\subseteq  \langle  P^{\textbf{X}} \rangle_{1}\diamond \langle \Omega^{i+1}(\textbf{X})[1]\rangle_{1}
  \subseteq \langle  S^{0}(\Lambda) \rangle_{1}\diamond \langle \Omega^{i+1}(\textbf{X})\rangle_{1}.
  \end{align*}
  Moreover, and by Lemma \ref{lem2.5}, we have
  \begin{align*}
   \langle \textbf{X}\rangle_{1}
  \subseteq \langle  S^{0}(\Lambda) \rangle_{n}\diamond \langle \Omega^{n}(\textbf{X})\rangle_{1}.
  \end{align*}
  And by Lemma \ref{longexactsequence1}, we can get
  \begin{align*}
   \langle \textbf{X}\rangle_{1}
  \subseteq \langle  S^{0}(\Lambda) \rangle_{n}\diamond \langle \oplus_{i\in \mathbb{Z}} S^{0}(Z_{i}(\Omega^{n}(\textbf{X})))\rangle_{1}
  \diamond \langle \oplus_{i\in \mathbb{Z}} S^{0}(B_{i}(\Omega^{n}(\textbf{X})))\rangle_{1}.
  \end{align*}

  Set $V$ is a $(m,n)$-Igusa-Todorov module.
  Note that $Z_{i}(\Omega^{n}(\textbf{X}))=\Omega^{n}(Z_{i}(\textbf{X}))\oplus Q_{i}\in \Omega^{n}(\mod \Lambda)$,
  by Definition \ref{mnITalgebra}, we can get the following exact sequence
  $$0
    \longrightarrow V_{m}
    \longrightarrow V_{m-1}
    \longrightarrow
    \cdots
    \longrightarrow V_{1}
    \longrightarrow V_{0}
    \longrightarrow Z_{i}(\Omega^{n}(\textbf{X}))
    \longrightarrow 0
    $$
  where $V_{j}\in [V\oplus \Lambda]_{1}$ for each $0 \leqslant j \leqslant m.$
  By Lemma \ref{lem2.6}, we have
  \begin{align*}
   \langle S^{0}(Z_{i}(\Omega^{n}(\textbf{X})))\rangle_{1}
  \subseteq \langle \oplus_{j=0}^{m}S^{0}(V_{j})\rangle_{m+1}\subseteq \langle S^{0}(V)\rangle_{m+1}.
  \end{align*}
  Similarly, we have
  \begin{align*}
   \langle S^{0}(B_{i}(\Omega^{n}(\textbf{X})))\rangle_{1}\subseteq \langle S^{0}(V)\rangle_{m+1}.
  \end{align*}
  Moreover, by Lemma \ref{lem2.5}, we have
  \begin{align*}
   \langle \textbf{X}\rangle_{1}
   \subseteq&  \langle  S^{0}(\Lambda) \rangle_{n}\diamond \langle \oplus_{i\in \mathbb{Z}} S^{0}(Z_{i}(\Omega^{n}(\textbf{X})))\rangle_{1}
  \diamond \langle \oplus_{i\in \mathbb{Z}} S^{0}(B_{i}(\Omega^{n}(\textbf{X})))\rangle_{1}\\
  \subseteq& \langle  S^{0}(\Lambda) \rangle_{n}\diamond \langle S^{0}(V)\rangle_{m+1}
  \diamond \langle S^{0}(V)\rangle_{m+1}\\
  \subseteq& \langle  S^{0}(\Lambda)\oplus S^{0}(V)  \rangle_{2m+n+2}.
  \end{align*}
  And then we get $D^{b}(\mod \Lambda)=\langle  S^{0}(\Lambda)\oplus S^{0}(V)  \rangle_{2m+n+2}.$
  By Definition \ref{tri.dimenson2.1}, we have
  $\tridim D^{b}(\mod \Lambda) \leqslant 2m+n+1.$
  \end{proof}

\begin{corollary}\label{cor1}
If artin algebra $\Lambda$ is $n$-Igusa-Todorov algebra.
 Then
$\tridim D^{b}(\mod \Lambda) \leqslant n+3.$
\end{corollary}
\begin{proof}
 By Remark \ref{mnITremark}(1) and Theorem \ref{maintheorem1}.
\end{proof}

  \section{$(\ell\ell^{t_{\V}}(\Lambda)-2,\pd \V+2)$-Igusa-Todorov
  algebra}

For a module $M\in \mod\Lambda$, we use $\rad M$ and $\top M$
to denote the radical and top of $M$ respectively.
We use $\add M$ to denote the subcategory of $\mod\Lambda$
consisting of direct summands of finite direct sums of module $M$.
Let $\mathcal{V}$ be a subset of all simple modules, and $\mathcal{V}'$
the set of all the others simple modules in $\mod\Lambda$.
We write $\mathfrak{F}(\mathcal{V}):=\{M\in\mod\Lambda\;|\;\text{ there exists a chain }
0\subseteq M_{0}\subseteq M_{1}\subseteq M_{2}\subseteq \cdots\subseteq M_{m-1}\subseteq M_{m}=M
\text{ of submodules of  } M \text{ such that each quotients } M_{i}/M_{i-1}\in\mathcal{V}\}.$
$\mathfrak{F}(\mathcal{V})$ is closed under extensions, submodules and quotients modules. Then we have
a torsion part $(\mathcal{T},\mathfrak{F}(\mathcal{V}))$, and the corresponding torsion radical is
denoted by $t_{\mathcal{V}}$, and we set $q_{t_{\mathcal{V}}}(M)=M/t_{\mathcal{V}}(M)$ for each $M\in \mod\Lambda$.
Note that, $\rad,\top,t_{\mathcal{V}}$ and $q_{t_{\mathcal{V}}}$ are covariant additive functors.

\begin{definition}{\rm (\cite{huard2013layer})
The $t_{\mathcal{V}}$-radical layer length is a function
$\ell\ell^{t_{\mathcal{V}}}:\;\;\mod\Lambda \longrightarrow \mathbb{N}\cup \{\infty\}$
 via $$\ell\ell^{t_{\mathcal{V}}}(M)=\inf\{i\geqslant 0\;|\;t_{\mathcal{V}}\circ F_{t_{\mathcal{V}}}^{i}(M)=0, M\in \mod\Lambda\}$$
 where $F_{t_{\mathcal{V}}}=\rad\circ t_{\mathcal{V}}. $
 }
\end{definition}

%
%
%

  \begin{lemma}{\rm (\cite{zheng2020thedimension})}\label{lem3}
  Let $\mathcal{V}$ be a subset of the set of all pairwise non-isomorphism simple $\Lambda$-modules.
  If $M\in \mathfrak{F}(\mathcal{V})$, then $\pd M\leqslant \pd\mathcal{V}.$
  In particular, we have $\pd q_{t_{\mathcal{V}}}(M)\leqslant \pd\mathcal{V}.$
  \end{lemma}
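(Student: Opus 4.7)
The plan is to prove the first assertion by induction on the length $m$ of a filtration of $M$ by elements of $\mathcal{V}$, and then to deduce the ``in particular'' statement from the torsion-pair structure. If $\mathcal{V}=\varnothing$, then $\mathfrak{F}(\mathcal{V})=0$ and both sides equal $-1$, so we may assume $\mathcal{V}\neq\varnothing$.

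For the base case $m=1$, the module $M$ is isomorphic to a simple module in $\mathcal{V}$, so $\pd M\leqslant \pd \mathcal{V}$ holds by the very definition of $\pd \mathcal{V}$. For the inductive step, suppose the claim holds for all modules admitting a $\mathcal{V}$-filtration of length at most $m-1$, and let
$$0=M_{0}\subseteq M_{1}\subseteq \cdots \subseteq M_{m-1}\subseteq M_{m}=M$$
be a filtration of $M$ with simple quotients in $\mathcal{V}$. The submodule $M_{m-1}$ carries the induced filtration of length $m-1$, so $\pd M_{m-1}\leqslant \pd \mathcal{V}$ by the induction hypothesis, while $M/M_{m-1}\in \mathcal{V}$ gives $\pd (M/M_{m-1})\leqslant \pd \mathcal{V}$. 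Applying the standard inequality
$$\pd M\leqslant \max\{\pd M_{m-1},\;\pd(M/M_{m-1})\}$$
to the short exact sequence $0\to M_{m-1}\to M\to M/M_{m-1}\to 0$ (which follows at once from the long exact Ext sequence) yields $\pd M\leqslant \pd\mathcal{V}$, completing the induction.

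For the second assertion, I would simply invoke the torsion-pair structure introduced just before the lemma. Since $(\mathcal{T},\mathfrak{F}(\mathcal{V}))$ is a torsion pair with torsion-free class $\mathfrak{F}(\mathcal{V})$ and torsion radical $t_{\mathcal{V}}$, the quotient $q_{t_{\mathcal{V}}}(M)=M/t_{\mathcal{V}}(M)$ lies in $\mathfrak{F}(\mathcal{V})$ by definition. Applying the first part of the lemma to $q_{t_{\mathcal{V}}}(M)$ immediately gives $\pd q_{t_{\mathcal{V}}}(M)\leqslant \pd\mathcal{V}$.

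No step presents a genuine obstacle: the argument is entirely routine, relying only on the definition of $\mathfrak{F}(\mathcal{V})$ as a filtration class and the standard Ext-dimension inequality for short exact sequences. The only point that requires any care is bookkeeping of the edge case $\pd \mathcal{V}=-1$, which is handled by the observation that $\mathfrak{F}(\varnothing)=0$, so that the inequality degenerates to $-1\leqslant -1$ and the second claim applies vacuously.
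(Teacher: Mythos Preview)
Your argument is correct. The paper does not actually supply its own proof of this lemma: it is stated with a citation to \cite{zheng2020thedimension} and no proof environment follows. Your induction on the length of a $\mathcal{V}$-filtration, using the standard inequality $\pd M\leqslant \max\{\pd M_{m-1},\pd(M/M_{m-1})\}$ coming from the long exact Ext sequence, is precisely the routine argument one would expect, and the deduction of the ``in particular'' part from $q_{t_{\mathcal{V}}}(M)\in\mathfrak{F}(\mathcal{V})$ is exactly how the torsion-pair setup is meant to be used. There is nothing to compare against here beyond noting that your write-up fills in what the paper leaves to the reference.
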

  \begin{lemma}{\rm (\cite{zheng2020thedimension})}\label{short-exact}
  Let $\mathcal{V}$ be a subset of the set of all pairwise non-isomorphism simple $\Lambda$-modules.
  For $M\in \mod \Lambda$ and positive integer $n$.
  We have the following exact sequences
  \begin{align*}
  0 \rightarrow t_{\mathcal{V}}F^{i}_{t_{\mathcal{V}}}(M)\rightarrow &F^{i}_{t_{\mathcal{V}}}(M)\rightarrow q_{t_{\mathcal{V}}}F^{i}_{t_{\mathcal{V}}}(M) \rightarrow 0\\
  0 \rightarrow F^{i+1}_{t_{\mathcal{V}}}(M)\rightarrow &t_{\mathcal{V}}F^{i}_{t_{\mathcal{V}}}(M)\rightarrow \top t_{\mathcal{V}}F^{i}_{t_{\mathcal{V}}}(M) \rightarrow 0,
  \end{align*}
  for $0\leqslant i \leqslant n-1.$
  \end{lemma}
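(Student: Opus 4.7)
The plan is to derive both short exact sequences directly from the definitions introduced just above the statement, with essentially no calculation required. The single identity that drives the second sequence is that $F_{t_{\mathcal{V}}} = \rad \circ t_{\mathcal{V}}$, so iterating gives $F^{i+1}_{t_{\mathcal{V}}}(M) = \rad\bigl(t_{\mathcal{V}} F^{i}_{t_{\mathcal{V}}}(M)\bigr)$; once this rewrite is recorded, everything else is a standard short exact sequence attached to a module.

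For the first sequence I would fix $N := F^{i}_{t_{\mathcal{V}}}(M)$ and invoke the canonical torsion sequence for the torsion pair $(\mathcal{T},\mathfrak{F}(\mathcal{V}))$ recorded in the preamble: since $t_{\mathcal{V}}$ is the associated torsion radical, there is a short exact sequence
$$0 \to t_{\mathcal{V}}(N) \to N \to N/t_{\mathcal{V}}(N) \to 0,$$
and by the definition $q_{t_{\mathcal{V}}}(N) = N/t_{\mathcal{V}}(N)$ the quotient equals $q_{t_{\mathcal{V}}}(N)$. Substituting $N = F^{i}_{t_{\mathcal{V}}}(M)$ produces precisely the first claimed sequence.

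For the second sequence, using the identity $F^{i+1}_{t_{\mathcal{V}}}(M) = \rad\bigl(t_{\mathcal{V}} F^{i}_{t_{\mathcal{V}}}(M)\bigr)$ noted above, I would set $L := t_{\mathcal{V}} F^{i}_{t_{\mathcal{V}}}(M)$ and apply the canonical short exact sequence
$$0 \to \rad(L) \to L \to \top(L) \to 0,$$
which, after reinserting the definition of $L$ and of $F^{i+1}_{t_{\mathcal{V}}}(M)$, is exactly the second claim.

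There is essentially no obstacle here: the statement is a direct unfolding of definitions. The only points worth checking are that every term genuinely lies in $\mod \Lambda$ throughout the range $0 \leqslant i \leqslant n-1$, which is automatic since $\rad$, $t_{\mathcal{V}}$, $\top$ and $q_{t_{\mathcal{V}}}$ are additive functors on $\mod\Lambda$ (as recorded in the paragraph defining $t_{\mathcal{V}}$), so each iterate remains finitely generated. The only mild subtlety is notational: one must read $F^{i+1}_{t_{\mathcal{V}}}$ consistently as $F_{t_{\mathcal{V}}}$ applied to $F^{i}_{t_{\mathcal{V}}}$, so that the identity $F^{i+1}_{t_{\mathcal{V}}}(M) = \rad(t_{\mathcal{V}} F^{i}_{t_{\mathcal{V}}}(M))$ matches the leftmost term of the second sequence.
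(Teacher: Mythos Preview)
Your proposal is correct: both sequences fall out immediately from the definitions, the first being the canonical torsion sequence for the pair $(\mathcal{T},\mathfrak{F}(\mathcal{V}))$ applied to $F^{i}_{t_{\mathcal{V}}}(M)$, and the second being the radical/top sequence applied to $t_{\mathcal{V}}F^{i}_{t_{\mathcal{V}}}(M)$ together with the identity $F^{i+1}_{t_{\mathcal{V}}}=\rad\circ t_{\mathcal{V}}\circ F^{i}_{t_{\mathcal{V}}}$. The paper itself does not supply a proof of this lemma but merely quotes it from \cite{zheng2020thedimension}, so there is nothing further to compare; your argument is exactly the intended unfolding of definitions.
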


\begin{lemma}{\rm (\cite{huard2013layer})}\label{lem-3.17}
Let $\mathcal{V} \subseteq \{\text{simple right } \Lambda$-$modules\}$ and $M\in \mod \Lambda$. If $t_{\mathcal{V}}(M)\neq 0$
then $$\ell\ell^{t_{\mathcal{V}}}(\Omega t_{\mathcal{V}}(M))\leqslant\ell\ell^{t_{\mathcal{V} }}(\Omega t_{\mathcal{V}}(\Lambda_{\Lambda}))-1.$$
\end{lemma}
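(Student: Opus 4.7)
My plan is to compare $\Omega t_{\mathcal{V}}(M)$ with $\Omega t_{\mathcal{V}}(\Lambda)$ by exploiting the fact that the projective cover of $t_{\mathcal{V}}(M)$ lives inside $\add(t_{\mathcal{V}}(\Lambda))$, and then to extract the extra radical layer responsible for the $-1$.

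First, I would establish the key structural fact: every indecomposable projective $P_{j}$ of $\Lambda$ with $\top P_{j}\in\mathcal{V}$ satisfies $t_{\mathcal{V}}(P_{j})=P_{j}$. Indeed, the quotient $P_{j}/t_{\mathcal{V}}(P_{j})$ has no composition factor in $\mathcal{V}$, yet if it were nonzero its top would coincide with $\top P_{j}\in\mathcal{V}$, contradiction. Consequently, since $t_{\mathcal{V}}(M)\in\mathfrak{F}(\mathcal{V})$ has top supported on $\mathcal{V}$, its projective cover $P:=P(t_{\mathcal{V}}(M))$ is a direct sum of such $P_{j}$'s and therefore lies in $\add(t_{\mathcal{V}}(\Lambda))$. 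The same observation applied to each $P_{S}$ ($S\in\mathcal{V}$) shows that every simple in $\mathcal{V}$ appears as a summand of $\top t_{\mathcal{V}}(\Lambda)$, so one obtains a surjection $t_{\mathcal{V}}(\Lambda)^{n}\twoheadrightarrow t_{\mathcal{V}}(M)$ for some $n$.

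Next I would lift this surjection to a surjection $\widetilde{\pi}\colon P(t_{\mathcal{V}}(\Lambda))^{n}\twoheadrightarrow P$ between projective covers (surjectivity by Nakayama), and apply the snake lemma to the induced morphism between the defining short exact sequences of $\Omega t_{\mathcal{V}}(\Lambda)^{n}$ and $\Omega t_{\mathcal{V}}(M)$; this yields a surjection $\Omega t_{\mathcal{V}}(\Lambda)^{n}\twoheadrightarrow\Omega t_{\mathcal{V}}(M)$. Because $\mathfrak{F}(\mathcal{V})$ is closed under submodules, both $\Omega t_{\mathcal{V}}(M)$ and $\Omega t_{\mathcal{V}}(\Lambda)$ sit inside $\mathfrak{F}(\mathcal{V})$, so on these modules $\ell\ell^{t_{\mathcal{V}}}$ coincides with the usual Loewy length and is therefore monotone under quotients.

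The main obstacle is extracting the extra $-1$ in the bound. The key point is that $\Omega t_{\mathcal{V}}(M)$, being the kernel of a projective cover, is contained in $\rad P$, and $\rad P$ is a direct summand of $\rad t_{\mathcal{V}}(\Lambda)^{n}=F_{t_{\mathcal{V}}}(\Lambda)^{n}$. Combining this containment with the surjection above and the short exact sequences of Lemma \ref{short-exact} (which describe how $t_{\mathcal{V}}$ and $F_{t_{\mathcal{V}}}$ interact across layers), one can show that the surjection $\Omega t_{\mathcal{V}}(\Lambda)^{n}\twoheadrightarrow\Omega t_{\mathcal{V}}(M)$ factors through the first $F_{t_{\mathcal{V}}}$-layer of $\Omega t_{\mathcal{V}}(\Lambda)^{n}$, peeling off exactly one $t_{\mathcal{V}}$-radical layer. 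The hypothesis $t_{\mathcal{V}}(M)\neq 0$ ensures $P\neq 0$, so that this peeling step is genuinely applicable.
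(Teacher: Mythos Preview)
The paper does not prove this lemma at all: it is quoted verbatim from \cite{huard2013layer} and used as a black box in Theorem~\ref{maintheorem2}. So there is no ``paper's own proof'' to compare against, and your proposal must stand on its own.

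Unfortunately, your argument rests on a reversal of the torsion pair. In the conventions of this paper, $(\mathcal{T},\mathfrak{F}(\mathcal{V}))$ has $\mathfrak{F}(\mathcal{V})$ as the \emph{torsion-free} class, so $t_{\mathcal{V}}(M)\in\mathcal{T}$ and $q_{t_{\mathcal{V}}}(M)=M/t_{\mathcal{V}}(M)\in\mathfrak{F}(\mathcal{V})$ (see Lemma~\ref{lem3} and the computations in the examples, where $\top P(1)\in\add\mathcal{V}'$ forces $t_{\mathcal{V}}(P(1))=P(1)$). Hence every composition factor of $P_{j}/t_{\mathcal{V}}(P_{j})$ lies in $\mathcal{V}$, not outside it; your first ``key structural fact'' is therefore false, and in fact $\top P_{j}\in\mathcal{V}$ forces $t_{\mathcal{V}}(P_{j})\subsetneq P_{j}$. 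Likewise, $t_{\mathcal{V}}(M)$ has top supported on $\mathcal{V}'$, not on $\mathcal{V}$, so your identification of its projective cover with a summand of $t_{\mathcal{V}}(\Lambda)$ goes through for the wrong reason (the summands $P_{j}$ with $\top P_{j}\in\mathcal{V}'$ are already projective, whence $\Omega t_{\mathcal{V}}(P_{j})=0$ for them).

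The downstream claims then collapse: $\Omega t_{\mathcal{V}}(M)$ is a submodule of a projective and has no reason to lie in $\mathfrak{F}(\mathcal{V})$; and for $N\in\mathfrak{F}(\mathcal{V})$ one has $t_{\mathcal{V}}(N)=0$, so $\ell\ell^{t_{\mathcal{V}}}(N)=0$ rather than the Loewy length. The monotonicity and ``peeling off one layer'' steps are thus unsupported. A correct argument (as in \cite{huard2013layer}) works instead with the inclusion $\Omega t_{\mathcal{V}}(M)\subseteq\rad P$ where $P\in\add\bigoplus_{\top P_{j}\in\mathcal{V}'}P_{j}$, together with the general behaviour of $\ell\ell^{t_{\mathcal{V}}}$ under submodules and radicals, not via $\mathfrak{F}(\mathcal{V})$.
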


By \cite[Lemma 2.6]{zheng2020upper}, we have
\begin{lemma}\label{lem-3.18}
 Let $\mathcal{V} \subseteq \{\text{simple right } \Lambda$-$modules\}$ and $M\in \mod \Lambda$.
  If $\ell\ell^{{t_{\mathcal{V}}}}(M)\leqslant k$, then $\ell\ell^{{t_{\mathcal{V}}}}(F_{t_{\mathcal{V}}}^{k}(M))=0$.
  \end{lemma}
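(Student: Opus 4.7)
The plan is to unwind the definition of $\ell\ell^{t_{\V}}$ and exploit the trivial fact that $F_{t_{\V}}=\rad\circ t_{\V}$ kills the zero module. Since
\[
\ell\ell^{t_{\V}}\bigl(F_{t_{\V}}^{k}(M)\bigr)=0
\;\Longleftrightarrow\;
t_{\V}\bigl(F_{t_{\V}}^{0}(F_{t_{\V}}^{k}(M))\bigr)=t_{\V}\bigl(F_{t_{\V}}^{k}(M)\bigr)=0,
\]
the entire statement reduces to showing $t_{\V}(F_{t_{\V}}^{k}(M))=0$.

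The argument will then proceed in three short steps. First, the hypothesis $\ell\ell^{t_{\V}}(M)\leqslant k$ means, by the very definition of the infimum, that there exists an index $i_{0}$ with $0\leqslant i_{0}\leqslant k$ and $t_{\V}(F_{t_{\V}}^{i_{0}}(M))=0$. Second, applying $\rad$ to this equality gives
\[
F_{t_{\V}}^{i_{0}+1}(M)=\rad\bigl(t_{\V}(F_{t_{\V}}^{i_{0}}(M))\bigr)=\rad(0)=0,
\]
and a trivial induction then yields $F_{t_{\V}}^{j}(M)=0$ for every $j\geqslant i_{0}+1$. Third, I would split into two cases: if $k=i_{0}$, the desired equality $t_{\V}(F_{t_{\V}}^{k}(M))=0$ is literally the hypothesis; if $k>i_{0}$, then $F_{t_{\V}}^{k}(M)=0$ and so $t_{\V}(F_{t_{\V}}^{k}(M))=t_{\V}(0)=0$. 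Either case delivers the claim.

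The main (and really only) point to be careful about is that the witness $i_{0}$ need not coincide with $k$; the definition of $\ell\ell^{t_{\V}}$ as an infimum merely guarantees some $i_{0}\leqslant k$, so one must handle $i_{0}=k$ and $i_{0}<k$ separately. Beyond this there is no serious obstacle: the statement is essentially the observation that once the tower $M, F_{t_{\V}}(M), F_{t_{\V}}^{2}(M),\ldots$ reaches $0$ it stays at $0$, so one is free to apply $F_{t_{\V}}$ as many additional times as desired without affecting the conclusion.
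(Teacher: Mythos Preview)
Your argument is correct: once $t_{\V}\circ F_{t_{\V}}^{i_0}(M)=0$ for some $i_0\leqslant k$, applying $\rad$ gives $F_{t_{\V}}^{i_0+1}(M)=0$, whence all further iterates vanish and in particular $t_{\V}(F_{t_{\V}}^{k}(M))=0$. Note that the paper does not actually give its own proof of this lemma; it simply cites \cite[Lemma~2.6]{zheng2020upper}, so there is no in-paper argument to compare against---your direct unwinding of the definition is exactly the kind of elementary verification one would expect for such a statement.
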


The following result should be well known, for convenience, we give a proof.
\begin{lemma}\label{syzygyshift}
Let $\Lambda$ be an artin algebra. Given the exact sequence
$0 \longrightarrow A \longrightarrow B\longrightarrow C\longrightarrow 0$
in $\mod\Lambda$, we can get the following exact sequence
$$0 \longrightarrow \Omega^{i+1}(C) \longrightarrow \Omega^{i}(A)\oplus Q_{i}\longrightarrow \Omega^{i}(B)\longrightarrow 0$$
for some projective module $Q_{i}$ in $\mod\Lambda$, where $i\geqslant 0.$
\end{lemma}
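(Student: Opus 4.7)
My plan is to prove the lemma by induction on $i \geqslant 0$.

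For the base case $i = 0$, I will construct the sequence $0 \longrightarrow \Omega(C) \longrightarrow A\oplus Q_{0}\longrightarrow B\longrightarrow 0$ via a pullback. Choose a projective module $P$ together with a surjection $P \twoheadrightarrow C$ whose kernel is $\Omega(C)$, and form the pullback $E := B \times_{C} P$ of $B \twoheadrightarrow C \twoheadleftarrow P$. The two natural projections out of $E$ yield short exact sequences $0 \to A \to E \to P \to 0$ (which splits because $P$ is projective, giving $E \cong A \oplus P$) and $0 \to \Omega(C) \to E \to B \to 0$. Setting $Q_{0} := P$ produces the desired sequence at level $i = 0$.

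For the inductive step, I will assume the sequence exists at level $i - 1$ and apply the horseshoe lemma to it. Given $0 \to \Omega^{i}(C) \to \Omega^{i-1}(A)\oplus Q_{i-1} \to \Omega^{i-1}(B) \to 0$, choose projective covers $P' \twoheadrightarrow \Omega^{i}(C)$ with kernel $\Omega^{i+1}(C)$ and $P'' \twoheadrightarrow \Omega^{i-1}(B)$ with kernel $\Omega^{i}(B)$. The horseshoe construction produces a projective surjection $P' \oplus P'' \twoheadrightarrow \Omega^{i-1}(A) \oplus Q_{i-1}$ whose kernel $K$ fits into a short exact sequence
$$0 \longrightarrow \Omega^{i+1}(C) \longrightarrow K \longrightarrow \Omega^{i}(B) \longrightarrow 0.$$
Since $Q_{i-1}$ is projective, the minimal projective cover of $\Omega^{i-1}(A) \oplus Q_{i-1}$ has the form $P^{*} \oplus Q_{i-1}$, where $P^{*} \twoheadrightarrow \Omega^{i-1}(A)$ is the minimal cover, and its kernel is $\Omega^{i}(A)$. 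Comparing these two projective presentations via Schanuel's lemma yields $K \oplus P^{*} \oplus Q_{i-1} \cong \Omega^{i}(A) \oplus P' \oplus P''$. Invoking the Krull--Schmidt theorem for $\mod \Lambda$ together with the fact that the minimal syzygy $\Omega^{i}(A)$ has no nonzero projective direct summand, I conclude $K \cong \Omega^{i}(A) \oplus Q_{i}$ for some projective $Q_{i}$. Substituting completes the inductive step.

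The main obstacle I expect is the careful manipulation of projective summands: the horseshoe construction produces a non-minimal syzygy in the middle term, so identifying it as $\Omega^{i}(A) \oplus Q_{i}$ requires Schanuel's lemma combined with the Krull--Schmidt property of $\mod \Lambda$. Apart from this bookkeeping, everything reduces to standard diagram-chasing with pullbacks and the horseshoe lemma.
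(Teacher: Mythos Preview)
Your overall strategy coincides with the paper's: both obtain the case $i=0$ by pulling back along a projective cover $P\twoheadrightarrow C$ to produce $0\to\Omega(C)\to A\oplus P\to B\to 0$, and both then feed this sequence into the horseshoe lemma. The paper simply applies horseshoe once to this fixed sequence and reads off all levels at once, whereas you phrase the same process as an induction; there is no substantive difference.

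There is, however, one incorrect step in your inductive argument. The assertion that ``the minimal syzygy $\Omega^{i}(A)$ has no nonzero projective direct summand'' is false for general artin algebras. For instance, over the path algebra of $1\to 2$ one has $\Omega(S(1))=S(2)=P(2)$, which is projective. Consequently, from the Schanuel relation $K\oplus P^{*}\oplus Q_{i-1}\cong \Omega^{i}(A)\oplus P'\oplus P''$ together with Krull--Schmidt alone you cannot deduce that $K\cong\Omega^{i}(A)\oplus Q_{i}$; a priori some projective summands of $\Omega^{i}(A)$ might be absorbed by $P^{*}\oplus Q_{i-1}$ rather than by $K$.

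The conclusion you want is nonetheless correct, and the fix is short. Since $P^{*}\oplus Q_{i-1}\to \Omega^{i-1}(A)\oplus Q_{i-1}$ is the projective cover and $P'\oplus P''\to \Omega^{i-1}(A)\oplus Q_{i-1}$ is merely some projective surjection, the essentiality of projective covers forces the lift $P'\oplus P''\to P^{*}\oplus Q_{i-1}$ to be a split epimorphism; writing $P'\oplus P''\cong (P^{*}\oplus Q_{i-1})\oplus R$ compatibly with the two surjections onto $\Omega^{i-1}(A)\oplus Q_{i-1}$, one sees directly that the kernel $K$ decomposes as $\Omega^{i}(A)\oplus R$. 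With this correction your inductive step goes through and your proof is complete.
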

\begin{proof}
Consider the following pullback
  \begin{gather}\label{diagram1}
  \begin{split}
  \xymatrix{
  & & 0\ar[d]& 0 \ar[d] & \\
 & & \Omega(C) \ar @{=}[r]\ar[d] &\Omega(C)\ar[d]&\\
  0\ar[r]&A  \ar[r]\ar@{=}[d]  & A\oplus P \ar[r]\ar[d]& P \ar[r]\ar[d]&0\;\\
  0\ar[r]&A\ar[r] &B\ar[r]\ar[d]& C \ar[r]\ar[d]&0,\\
  &&0&0&  }
  \end{split}
  \end{gather}
  where $P$ is the projective cover of $C$. Applying horseshoe lemma to the
  middle column exact sequence of the above diagram, we can get the following exact sequences
  $$0 \longrightarrow \Omega^{i+1}(C) \longrightarrow \Omega^{i}(A)\oplus Q_{i}\longrightarrow \Omega^{i}(B)\longrightarrow 0$$
for some projective module $Q_{i}$ in $\mod\Lambda$, where $i\geqslant 0.$
\end{proof}
Now, we can get the following main results.

  \begin{theorem}\label{maintheorem2}
  Let $\Lambda$ be an artin algebra.
  $\V$ is the set of some simple modules with finite projective dimension.
  If $\ell\ell^{t_{\V}}(\Lambda)\geqslant 2,$
then $\Lambda$ is a $(\ell\ell^{t_{\V}}(\Lambda)-2,\pd \V+2)$-Igusa-Todorov algebra.
  \end{theorem}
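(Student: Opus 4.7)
The plan is to construct an explicit $(\ell-2,\pd\V+2)$-Igusa-Todorov module $V$ for $\Lambda$ using syzygies of the canonical quotients $\Lambda/F^j_{t_\V}(\Lambda)$, and to derive the required long exact sequence by a syzygy-shift argument applied to an auxiliary approximation resolution of $M$.

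Set $\ell:=\ell\ell^{t_\V}(\Lambda)$, $n:=\pd\V+2$, and $M_i:=F^i_{t_\V}(M)$. First, using that $M_\ell\in\mathfrak{F}(\V)$ has $\pd M_\ell\leq\pd\V=n-2$ by Lemma \ref{lem3}, I would apply Lemma \ref{syzygyshift} to $0\to M_\ell\to M\to M/M_\ell\to 0$ with parameter $n-1$ to obtain $\Omega^n(M)\cong\Omega^n(M/M_\ell)$ up to a projective summand, reducing the problem to the case where the ambient module is $M/M_\ell$.

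The technical heart of the proof is to establish, in the spirit of \cite[Lemma 3.1]{zheng_huang_2020finitistic}, an exact sequence
$$0\longrightarrow N_{\ell-1}\longrightarrow N_{\ell-2}\longrightarrow\cdots\longrightarrow N_0\longrightarrow M\longrightarrow 0$$
with each $N_i\in\add(\Lambda/F^{\ell-i}_{t_\V}(\Lambda))$, obtained by iterated right $\add(\Lambda/F^j_{t_\V}(\Lambda))$-approximations whose kernels have strictly smaller $t_\V$-layer length. The leading term $N_0\in\add(\Lambda/F^\ell_{t_\V}(\Lambda))$ satisfies $\pd N_0\leq\pd\V+1=n-1$, since $\Omega(\Lambda/F^\ell_{t_\V}(\Lambda))=F^\ell_{t_\V}(\Lambda)\in\mathfrak{F}(\V)$, so $\Omega^{n-1}(N_0)$ is projective. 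Splicing the above resolution into short exact sequences $0\to K_{i+1}\to N_i\to K_i\to 0$ (with $K_0=M$) and iterating Lemma \ref{syzygyshift} along the splice, absorbing projective summands as they arise, I would then extract a single exact sequence
$$0\longrightarrow \Omega^n(N_{\ell-1})\longrightarrow\cdots\longrightarrow \Omega^n(N_1)\longrightarrow \Omega^n(M)\longrightarrow 0$$
of length $\ell-2$, in direct analogy with the shift trick of Remark \ref{mnITremark}(5). Setting
$$V:=\bigoplus_{j=1}^{\ell-1}\Omega^n\bigl(\Lambda/F^j_{t_\V}(\Lambda)\bigr)$$
then places every non-projective term into $\add(V\oplus\Lambda)$, exhibiting $V$ as the desired Igusa-Todorov module.

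The hard part will be twofold. First, establishing the intermediate $\add(\Lambda/F^{\ell-i}_{t_\V}(\Lambda))$-approximation resolution of $M$: the analogous statement for ordinary radical powers rests on the semisimplicity of $\rad^{j-1}M/\rad^jM$, but here each layer $M_i/M_{i+1}$ is a two-step extension (via Lemma \ref{short-exact}) of a $\mathfrak{F}(\V)$-piece by a semisimple $\mathcal{T}$-piece, so the approximation argument must interleave the two short exact sequences and use Lemma \ref{lem-3.17} to ensure the $t_\V$-layer length drops at each step. Second, carrying the syzygy shift through all $\ell-1$ stages: beyond the first stage one cannot rely on a mere projectivity argument for $\Omega^{n-1-i}(N_i)$ (the intermediate $N_i$ need not have small projective dimension), so the iteration requires careful bookkeeping that re-uses the $\mathfrak{F}(\V)$-vanishing of the subquotients produced by the splicing, together with a joint induction on $\ell\ell^{t_\V}(M)$ and the extension-absorption pattern already present in the proof of Theorem \ref{maintheorem1}.
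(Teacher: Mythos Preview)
Your overall strategy---build an approximation resolution of $M$ by modules in $\add(\Lambda/F^{j}_{t_\V}(\Lambda))$ and then syzygy-shift it---has a genuine gap at the step you yourself flag as hardest. The analogy with \cite[Lemma~3.1]{zheng_huang_2020finitistic} breaks down: in the Loewy case a module $M$ with $\LL(M)\leqslant k$ is automatically a $\Lambda/\rad^{k}\Lambda$-module because $\rad^{k}\Lambda$ is a two-sided ideal annihilating $M$, and this is what forces a surjective $\add(\Lambda/\rad^{k}\Lambda)$-cover to exist with kernel of strictly smaller Loewy length. By contrast $F^{j}_{t_\V}(\Lambda)$ is only a right submodule produced by iterating the functor $\rad\circ t_\V$; it is not an ideal, so there is no reason a module with $\ell\ell^{t_\V}(M)\leqslant k$ should be a quotient of copies of $\Lambda/F^{k}_{t_\V}(\Lambda)$, let alone with kernel of smaller layer length. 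Lemma~\ref{lem-3.17}, which you invoke to make the layer length drop, concerns $\Omega t_\V(M)$ rather than kernels of approximations, and the two short exact sequences of Lemma~\ref{short-exact} describe the internal layers of $N$, not how $N$ sits relative to the fixed objects $\Lambda/F^{j}_{t_\V}(\Lambda)$. A secondary issue: your opening reduction assumes $M_{\ell}=F^{\ell}_{t_\V}(M)\in\mathfrak{F}(\V)$, i.e.\ $\ell\ell^{t_\V}(M)\leqslant\ell\ell^{t_\V}(\Lambda)$, which is not among the cited lemmas; the paper deliberately sidesteps this by first passing to $N=\Omega t_\V(M)$ and invoking Lemma~\ref{lem-3.17} to obtain $\ell\ell^{t_\V}(N)\leqslant\ell\ell^{t_\V}(\Lambda)-1$.

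The paper's argument is structurally different and never approximates by $\Lambda/F^{j}_{t_\V}(\Lambda)$. Setting $n=\pd\V$ and $N=\Omega t_\V(M)$, it applies the horseshoe lemma to each pair of sequences in Lemma~\ref{short-exact}; since $q_{t_\V}F^{i}_{t_\V}(N)\in\mathfrak{F}(\V)$ has projective dimension at most $n$, one obtains for every $i$
\[
0\longrightarrow\Omega^{n+1}(F^{i+1}_{t_\V}(N))\longrightarrow\Omega^{n+1}(F^{i}_{t_\V}(N))\oplus P_i\longrightarrow\Omega^{n+1}\bigl(\top t_\V F^{i}_{t_\V}(N)\bigr)\longrightarrow 0.
\]
Iterated pushouts splice these into a single tower, and a final pass of Lemma~\ref{syzygyshift} with staggered indices collapses the tower into the required long exact sequence of length $\ell\ell^{t_\V}(\Lambda)-2$ ending in $\Omega^{n+2}(M)$. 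The decisive observation is that every $\top t_\V F^{i}_{t_\V}(N)$ is \emph{semisimple}, so each non-projective term lies in $\add\bigl(\Lambda\oplus\bigoplus_{i=n+1}^{\,n+m-1}\Omega^{i}(\Lambda/\rad\Lambda)\bigr)$; the Igusa--Todorov module is therefore built from syzygies of $\Lambda/\rad\Lambda$, not of $\Lambda/F^{j}_{t_\V}(\Lambda)$ as you propose.
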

  \begin{proof}
  Let $\ell\ell^{t_{\V}}(\La)=m$ and $\pd \V=n$.
  Let $M\in \mod \Lambda$ and $N=\Omega t_{\V}(M)$.
  From the following exact sequence
  \begin{align*}
  0 \rightarrow t_{\mathcal{V}}(M)\rightarrow &M\rightarrow q_{t_{\mathcal{V}}}(M) \rightarrow 0,
  \end{align*}
  and by horseshoe lemma, we can get
  \begin{align}
   \Omega^{n+2}(M) \cong \Omega^{n+2}(t_{\V}(M))=\Omega^{n+1}(N)\label{eq1}.
  \end{align}
  By Lemma \ref{short-exact}, we have the following exact sequences
  \begin{align}
  0 \rightarrow t_{\mathcal{V}}F^{i}_{t_{\mathcal{V}}}(N)\rightarrow &F^{i}_{t_{\mathcal{V}}}(N)\rightarrow q_{t_{\mathcal{V}}}F^{i}_{t_{\mathcal{V}}}(N) \rightarrow 0 \label{eq2}\\
 0 \rightarrow F^{i+1}_{t_{\mathcal{V}}}(N)\rightarrow &t_{\mathcal{V}}F^{i}_{t_{\mathcal{V}}}(N)\rightarrow \top t_{\mathcal{V}}F^{i}_{t_{\mathcal{V}}}(N) \rightarrow 0\label{eq3},
  \end{align}
  By horseshoe lemma, and short exact sequences (\ref{eq2}) and (\ref{eq3}), we can obtain the following exact sequences
  \begin{align}
   &\Omega^{n+1}(t_{\mathcal{V}}F^{i}_{t_{\mathcal{V}}}(N))\cong \Omega^{n+1}(F^{i}_{t_{\mathcal{V}}}(N))\label{eq4}\\
  0 \rightarrow \Omega^{n+1}(F^{i+1}_{t_{\mathcal{V}}}(N))\rightarrow &\Omega^{n+1}(t_{\mathcal{V}}F^{i}_{t_{\mathcal{V}}}(N))\oplus P_{i}\rightarrow \Omega^{n+1}(\top t_{\mathcal{V}}F^{i}_{t_{\mathcal{V}}}(N)) \rightarrow 0\label{eq5},
  \end{align}
  where all $P_{i}$ are projective.
  Moreover, by $(\ref{eq4})(\ref{eq5})$, for each $i\geqslant0$,
   we get the following exact sequences
  \begin{align}
  0 \rightarrow \Omega^{n+1}(F^{i+1}_{t_{\mathcal{V}}}(N))\rightarrow & \Omega^{n+1}(F^{i}_{t_{\mathcal{V}}}(N))\oplus P_{i}\rightarrow \Omega^{n+1}(\top t_{\mathcal{V}}F^{i}_{t_{\mathcal{V}}}(N)) \rightarrow 0\label{eq6}.
  \end{align}
  By $(\ref{eq6})$, let $i=0, 1$, and consider the following push out,
  \begin{gather}\label{diagram1}
  \begin{split}
  \xymatrix{
  & 0\ar[d]& 0 \ar[d] && \\
 & \Omega^{n+1}(F^{2}_{t_{\mathcal{V}}}(N)) \ar @{=}[r]\ar[d] &\Omega^{n+1}(F^{2}_{t_{\mathcal{V}}}(N))\ar[d]&&\\
  0\ar[r]&\Omega^{n+1}(F^{}_{t_{\mathcal{V}}}(N))\oplus \textcolor{red}{P_{1}} \ar[r]\ar[d]  & \Omega^{n+1}(N)\oplus P_{0}\oplus \textcolor{red}{P_{1}} \ar[r]\ar[d]& \Omega^{n+1}(\top t_{\mathcal{V}}(N))\ar[r]\ar@{=}[d]&0\;\\
  0\ar[r]&\Omega^{n+1}(\top t_{\mathcal{V}}F^{}_{t_{\mathcal{V}}}(N))\ar[r]\ar[d] &D_{0}\ar[r]\ar[d]& \Omega^{n+1}(\top t_{\mathcal{V}}(N)) \ar[r]&0.\\
&0&0& & }
  \end{split}
  \end{gather}
   by the diagram $(\ref{diagram1})$, we can get the following two exact sequences
    \begin{align}
  0 \longrightarrow \Omega^{n+1}(F^{2}_{t_{\mathcal{V}}}(N))\longrightarrow \Omega^{n+1}(N)\oplus P_{0}\oplus \textcolor{red}{P_{1}} \longrightarrow D_{0} \longrightarrow 0,\label{eq8}
  \end{align}
    \begin{align}
 0 \longrightarrow \Omega^{n+1}(\top t_{\mathcal{V}}F^{}_{t_{\mathcal{V}}}(N))\longrightarrow &D_{0}\longrightarrow \Omega^{n+1}(\top t_{\mathcal{V}}(N)) \longrightarrow 0.\label{eq9}
  \end{align}
By $(\ref{eq8})$ and $(\ref{eq6})$(for the case $i=2$), and consider the following push out,
  \begin{gather}\label{diagram2}
  \begin{split}
  \xymatrix{
  & 0\ar[d]& 0 \ar[d] && \\
 & \Omega^{n+1}(F^{3}_{t_{\mathcal{V}}}(N)) \ar @{=}[r]\ar[d] &\Omega^{n+1}(F^{3}_{t_{\mathcal{V}}}(N))\ar[d]&&\\
  0\ar[r]&\Omega^{n+1}(F^{2}_{t_{\mathcal{V}}}(N))\oplus \textcolor{red}{P_{2}} \ar[r]\ar[d]  & \Omega^{n+1}(N)\oplus P_{0}\oplus P_{1} \oplus\textcolor{red}{P_{2}} \ar[r]\ar[d]& D_{0}\ar[r]\ar@{=}[d]&0\;\\
  0\ar[r]&\Omega^{n+1}(\top t_{\mathcal{V}}F^{2}_{t_{\mathcal{V}}}(N))\ar[r]\ar[d] &D_{1}\ar[r]\ar[d]& D_{0} \ar[r]&0.\\
&0&0& & }
  \end{split}
  \end{gather}
 by the diagram $(\ref{diagram2})$, we can get the following two exact sequences
    \begin{align}
  0 \longrightarrow \Omega^{n+1}(F^{3}_{t_{\mathcal{V}}}(N))\longrightarrow \Omega^{n+1}(N)\oplus P_{0}\oplus P_{1} \oplus\textcolor{red}{P_{2}}\longrightarrow D_{1} \longrightarrow 0,\label{eq11}
  \end{align}
    \begin{align}
 0 \longrightarrow \Omega^{n+1}(\top t_{\mathcal{V}}F^{2}_{t_{\mathcal{V}}}(N))\longrightarrow &D_{1}\longrightarrow D_{0} \longrightarrow 0.\label{eq12}
  \end{align}
Continue this process, we can get the following exact sequences
    \begin{align}
  0 \longrightarrow \Omega^{n+1}(F^{j}_{t_{\mathcal{V}}}(N))\longrightarrow \Omega^{n+1}(N)\oplus (\oplus_{i=0}^{j-1} P_{i})\longrightarrow D_{j-2} \longrightarrow 0,\label{eq13}
  \end{align}
    \begin{align}
 0 \longrightarrow \Omega^{n+1}(\top t_{\mathcal{V}}F^{j-1}_{t_{\mathcal{V}}}(N))\longrightarrow &D_{j-2}\longrightarrow D_{j-3} \longrightarrow 0,\label{eq14}
  \end{align}
where $j\geqslant 2$ and $D_{-1}:=\Omega^{n+1}(\top t_{\mathcal{V}}(N)).$

From $(\ref{eq13})$$(\ref{eq13})$, we list the following exact sequences
%
    \begin{align*}
    &0 \longrightarrow \Omega^{n+1}(\top t_{\mathcal{V}}F^{}_{t_{\mathcal{V}}}(N))\longrightarrow D_{0}\longrightarrow D_{-1} \longrightarrow 0\\
  &0 \longrightarrow \Omega^{n+1}(\top t_{\mathcal{V}}F^{2}_{t_{\mathcal{V}}}(N))\longrightarrow D_{1}\longrightarrow D_{0} \longrightarrow 0\\
  &0 \longrightarrow \Omega^{n+1}(\top t_{\mathcal{V}}F^{3}_{t_{\mathcal{V}}}(N))\longrightarrow D_{2}\longrightarrow D_{1} \longrightarrow 0\\
  &\;\;\;\;\;\;\;\;\;\;\;\;\;\;\;\;\;\;\;\vdots\\
   &0 \longrightarrow \Omega^{n+1}(\top t_{\mathcal{V}}F^{m-2}_{t_{\mathcal{V}}}(N))\longrightarrow D_{m-3}\longrightarrow D_{m-4} \longrightarrow 0\\
   &0 \longrightarrow \Omega^{n+1}(F^{m-1}_{t_{\mathcal{V}}}(N))\longrightarrow \Omega^{n+1}(N)\oplus (\oplus_{i=0}^{m-2} P_{i})\longrightarrow D_{m-3} \longrightarrow 0.
    \end{align*}
  On the other hand, by Lemma \ref{lem-3.17}, we have
  $\ell\ell^{t_{\V}}(N)=\ell\ell^{t_{\V}}(\Omega t_{\V}(M))\leqslant  \ell\ell^{t_{\V}}(\Lambda)-1=m-1$.
  By Lemma \ref{lem-3.18}, we have
  $\ell\ell^{t_{\V}}({F^{m-1}_{t_{\V}}}(N))=0,$ that is,
 $F^{m-1}_{t_{\V}}(N)\in \mathfrak{F}(\mathcal{V})$.
  And by Lemma \ref{lem3},
   $\pd F^{m-1}_{t_{\V}}(N) \leqslant \pd \V=n.$
   Now, by Lemma \ref{syzygyshift} and the above exact sequences, we can get the following
   \begin{align*}
  &0 \longrightarrow\Omega^{m-2}(D_{-1}) \longrightarrow \Omega^{n+m-2}(\top t_{\mathcal{V}}F^{}_{t_{\mathcal{V}}}(N))\oplus Q_{m-2}\longrightarrow \Omega^{m-3}(D_{0}) \longrightarrow 0\\
  &0 \longrightarrow\Omega^{m-3}(D_{0}) \longrightarrow \Omega^{n+m-3}(\top t_{\mathcal{V}}F^{2}_{t_{\mathcal{V}}}(N))\oplus Q_{m-3}\longrightarrow \Omega^{m-4}(D_{1}) \longrightarrow 0\\
    &0 \longrightarrow\Omega^{m-4}(D_{1}) \longrightarrow \Omega^{n+m-4}(\top t_{\mathcal{V}}F^{3}_{t_{\mathcal{V}}}(N))\oplus Q_{m-4}\longrightarrow \Omega^{m-5}(D_{2}) \longrightarrow 0\\
  &\;\;\;\;\;\;\;\;\;\;\;\;\;\;\;\;\;\;\;\vdots\\
    &0 \longrightarrow\Omega^{}(D_{m-4}) \longrightarrow \Omega^{n+1}(\top t_{\mathcal{V}}F^{m-2}_{t_{\mathcal{V}}}(N))\oplus Q_{1}\longrightarrow D_{m-3} \longrightarrow 0\\
   &\Omega^{n+1}(N)\oplus (\oplus_{i=0}^{m-2} P_{i})\cong D_{m-3},
    \end{align*}
    where $Q_{i}$ is projective for each $1\leqslant i \leqslant m-2.$
Consider the following pullback
\[\xymatrix{
&&0\ar[d]&0\ar[d]&\\
0 \ar[r]& \Omega^{}(D_{m-4})\ar[r]\ar[d]& W\ar[r]\ar[d]& \Omega^{n+1}(N)\ar[r]\ar[d]&0\\
0 \ar[r]& \Omega^{}(D_{m-4})\ar[r]&\Omega^{n+1}(\top t_{\mathcal{V}}F^{m-2}_{t_{\mathcal{V}}}(N))\oplus Q_{1}\ar[r]\ar[d]  & \Omega^{n+1}(N)\oplus (\oplus_{i=0}^{m-2} P_{i})\ar[r]\ar[d]&0\\
        &                        & \oplus_{i=0}^{m-2} P_{i}\ar[r]\ar[d]& \oplus_{i=0}^{m-2} P_{i}\ar[r]\ar[d]&0\\
&&0&0&\\
}\]
   The short exact sequence of the middle column in the above diagram is split since $\oplus_{i=0}^{m-2} P_{i}$ is projective.
   That is, $W\in \add(\Omega^{n+1}(\top t_{\mathcal{V}}F^{m-2}_{t_{\mathcal{V}}}(N))\oplus \Lambda).$

Now, from $(\ref{eq1})$ and the above exact sequences, we can get the following long exact
\begin{align*}
  0 \longrightarrow \Omega^{m-2}(D_{-1})\longrightarrow & \Omega^{n+m-2}(\top t_{\mathcal{V}}F^{}_{t_{\mathcal{V}}}(N))\oplus Q_{m-2}\longrightarrow \Omega^{n+m-3}(\top t_{\mathcal{V}}F^{2}_{t_{\mathcal{V}}}(N))\oplus Q_{m-3} \longrightarrow
  \end{align*}
\begin{align*}
  \cdots\longrightarrow W\longrightarrow \Omega^{n+2}(M) \longrightarrow0,
  \end{align*}
 where  $$\Omega^{m-2}(D_{-1}),\Omega^{n+m-2}(\top t_{\mathcal{V}}F^{}_{t_{\mathcal{V}}}(N))\oplus Q_{m-2},\cdots,W\in \add(\Lambda \oplus (\oplus_{i=n+1}^{n+m-1}\Omega^{i}(\Lambda/\rad\Lambda))).$$
  By Definition \ref{mnITalgebra}, we know that
  $\Lambda$ is $(m-2,n+2)$-Igusa-Todorov algebra, that is,
  $(\ell\ell^{t_{\V}}(\Lambda)-2,\pd \V+2)$-Igusa-Todorov algebra.
  \end{proof}

  \begin{theorem}\label{maintheorem3}
  Let $\Lambda$ be an artin algebra.
  $\V$ is the set of some simple modules with finite projective dimension.
  If $\ell\ell^{t_{\V}}(\Lambda)\geqslant 2.$
  Then $\tridim D^{b}(\mod \Lambda) \leqslant 2\ell\ell^{t_{\V}}(\Lambda)+\pd \V-1$.
  \end{theorem}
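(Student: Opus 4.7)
The plan is immediate once one recognizes that the two preceding main theorems fit together as an exact composition. Theorem \ref{maintheorem2} asserts that under the standing hypotheses (simple modules in $\V$ have finite projective dimension, and $\ell\ell^{t_\V}(\Lambda) \geqslant 2$), the algebra $\Lambda$ is an $(\ell\ell^{t_\V}(\Lambda)-2,\,\pd\V+2)$-Igusa-Todorov algebra. On the other hand, Theorem \ref{maintheorem1} bounds the derived dimension of any $(m,n)$-Igusa-Todorov algebra by $2m+n+1$. So the strategy is simply to substitute $m = \ell\ell^{t_\V}(\Lambda)-2$ and $n = \pd\V+2$ into this bound.

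More concretely, I would first invoke Theorem \ref{maintheorem2} to fix an $(\ell\ell^{t_\V}(\Lambda)-2,\,\pd\V+2)$-Igusa-Todorov module $V\in \mod\Lambda$ for $\Lambda$. Then I would apply Theorem \ref{maintheorem1} to conclude
\[
\tridim D^{b}(\mod \Lambda) \;\leqslant\; 2\bigl(\ell\ell^{t_\V}(\Lambda)-2\bigr) + \bigl(\pd\V+2\bigr) + 1 \;=\; 2\ell\ell^{t_\V}(\Lambda)+\pd\V-1,
\]
which is exactly the asserted inequality. No additional homological construction is needed because all of the work has already been done in Theorem \ref{maintheorem2} (the torsion-radical decomposition, the iterated pushouts, the syzygy-shifting in Lemma \ref{syzygyshift}) and in Theorem \ref{maintheorem1} (the complex-level reduction using Lemmas \ref{lem2.6}, \ref{longexactsequence1}, and \ref{bound_complex_X}).

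There is no serious obstacle here: the only thing to check is that the hypothesis of Theorem \ref{maintheorem2} (finite projective dimension of the simples in $\V$, so that $\pd\V$ is finite) is in force, which is assumed; and that the numerical substitution is correctly carried out. If one wished to add a remark, one could note that Theorem \ref{thm1.1}(5) (the case $\ell\ell^{t_\V}(\Lambda)\leqslant 2$) combines with this theorem to give the uniform bound $\max\{2\ell\ell^{t_\V}(\Lambda)+\pd\V-1,\, \pd\V+3\}$ stated in the corollary above, but for the theorem itself a one-line proof chaining Theorems \ref{maintheorem2} and \ref{maintheorem1} suffices.
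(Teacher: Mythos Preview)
Your proposal is correct and matches the paper's own proof essentially verbatim: the paper simply invokes Theorems \ref{maintheorem1} and \ref{maintheorem2}, substitutes $m=\ell\ell^{t_{\V}}(\Lambda)-2$ and $n=\pd\V+2$ into the bound $2m+n+1$, and simplifies to $2\ell\ell^{t_{\V}}(\Lambda)+\pd\V-1$.
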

  \begin{proof}
  By Theorem \ref{maintheorem1} and Theorem \ref{maintheorem2}, we have
  $$\tridim D^{b}(\mod \Lambda) \leqslant 2
  (\ell\ell^{t_{\V}}(\Lambda)-2)+(\pd \V+2)+1
  =2\ell\ell^{t_{\V}}(\Lambda)+\pd \V-1.$$
  \end{proof}
  \section{Examples}
\begin{example}
{\rm (\cite{zheng2020upper})
Consider the bound quiver algebra $\Lambda=kQ/I$, where $k$ is an algebraically closed field and $Q$
is given by
$$\xymatrix{
&1 \ar@(l,u)^{\alpha_{1}}\ar[r]^{\alpha_{2}}  \ar[ld]_{\alpha_{m+1}}\ar[rd]^{\alpha_{m+2}}
&2\ar[r]^{\alpha_{3}}&{3}\ar[r]^{\alpha_{4}}  &{4}\ar[r]^{\alpha_{5}}&\cdots\ar[r]^{\alpha_{m}}&m\\
m+1&&m+2&&&&
}$$
and $I$ is generated by
$\{\alpha_{1}^{2},\alpha_{1}\alpha_{m+1},\alpha_{1}\alpha_{m+2},\alpha_{1}\alpha_{2},
\alpha_{2}\alpha_{3}\cdots\alpha_{m}\}$ with $m\geqslant 10$.
Then the indecomposable projective $\Lambda$-modules are
$$\xymatrix@-1.0pc@C=0.1pt
{ &  &  &1\edge[lld]\edge[ld]\edge[d]\edge[dr]
&  && 2\edge[d] &&&& & &&&&  &&&&  &\\
&1& m+1&m+2&2\edge[d] && 3\edge[d] && 3\edge[d] &&&& &&&& &\\
P(1)=&  &  &    &3\edge[d] &P(2)=&4\edge[d]  &P(3)=&4\edge[d] &P(m+1)=m+1,&P(m+2)=m+2&\\
&  &  &  &\vdots\edge[d]&&\vdots\edge[d]&&\vdots\edge[d]&& &&&& &\\
&  &  &  & \;m-1, &&\;m, && \;m,  &&&& &&&\\
&  &  &  &  &&&&  & &&&& & &&&& &&&\\
}$$
and $P(i+1)=\rad P(i)$ for any $2 \leqslant i\leqslant m-1$.

We have
\begin{equation*}
\pd S(i)=
\begin{cases}
\infty, &\text{if}\;\;i=1;\\
1,&\text{if} \;\;2 \leqslant  i\leqslant m-1;\\
0,&\text{if}\;\; m \leqslant  i\leqslant m+2.
\end{cases}
\end{equation*}
So $\mathcal{S}^{\infty}=\{S(1)\}$ and $\mathcal{S}^{<\infty}=\{ S(i)\mid 2\leqslant i\leqslant m+2\}$.

Let $\mathcal{V}:=\{S(i)\mid 3\leqslant i \leqslant m-1\}\subseteq\mathcal{S}^{<\infty}$.
 Then $\pd\V =1$ and $\ell\ell^{t_{\mathcal{V}}}(\Lambda)=2$(see \cite[Example 4.1]{zheng2020upper})

(1) By Theorem \ref{thm1.1}(1), we have $\tridim D^{b}(\mod \Lambda) \leqslant \LL(\Lambda)-1=m-2.$

(2) By Theorem \ref{thm1.1}(3), we have
  $\tridim D^{b}(\mod \Lambda) \leqslant (\pd\mathcal{V}+2)(\ell\ell^{t_{\mathcal{V}}}(\Lambda)+1)-2=7.$

(3) By Theorem \ref{thm1.1}(4), we have
  $\tridim D^{b}(\mod \Lambda) \leqslant 2(\pd\mathcal{V}+\ell\ell^{t_{\mathcal{V}}}(\Lambda))+1=7.$

(4) By Theorem \ref{thm1.1}(5),
  $\tridim D^{b}(\mod \Lambda) \leqslant \pd\mathcal{V}+3=4.$

(5) By Theorem \ref{maintheorem3},
  $\dim D^{b}(\mod \Lambda) \leqslant 2\ell\ell^{t_{\V}}(\Lambda_{\Lambda})+\pd\mathcal{V}-1=4.$
In fact, if $\ell\ell^{t_{\V}}(\Lambda_{\Lambda})=2$, then the upper
bound $2\ell\ell^{t_{\V}}(\Lambda_{\Lambda})+\pd\mathcal{V}-1$ is equal to
$\pd\mathcal{V}+3$.
}
\end{example}

  \begin{example}{\rm (\cite{zheng2020thedimension})}
  {\rm Let $k$ be an algebraically closed field and $\Lambda=kQ/I$, where $Q$ the quiver
  \[\xymatrix{
  &{1}\ar@(l,u)^{\alpha} \ar[rr]^{\beta} &&2 \ar@<0.5ex>[rr]^{\gamma_{1}}\ar@<-0.5ex>[rr]_{\gamma_{2}}&& 3 \ar[rr]^{\delta}&&4 \ar[d]^{\rho_{1}}&\\
  &n+4\ar@<0.5ex>[u]^{\mu_{1}} \ar@<-0.5ex>[u]_{\mu_{2}} &n+3\ar[l]_{\rho_{n}}&n+2\ar[l]_{\rho_{n-1}}&\cdots\ar[l]_{\rho_{n-2}}&7\ar[l]_{\rho_{4}}&6\ar[l]_{\rho_{3}}&5\ar[l]_{\rho_{2}}
  }\]
  and $I$ is generated by $\{\alpha^{m},\alpha\beta,
  \gamma_{1}\delta-\gamma_{2}\delta, \rho_{n}\mu_{1}\alpha, \rho_{n}\mu_{2}\alpha, \mu_{1}\beta-\mu_{2}\beta\}$, $n>2m+1$,
  $m\geqslant 5$.

  Then the indecomposable projective $\Lambda$-modules are
  $$\xymatrix@-1.0pc@C=0.1pt
  {    &  &  &*+[F]{1}\edge[lld]\edge[dr]&& &&&&*+[F]{2}\edge[ld]\edge[rd]&& & &&&&  &&&&  & &  \\
  &*+[F]{1}\edge[d] &&&*+[F]{2}\edge[ld]\edge[rd]&& &&3\edge[rd]&& 3\edge[ld]&&& &&&3\edge[d]&&&&&&*+[F]{n+4}\edge[ld]\edge[rd]&&&   \\
  P(1)=&*+[F]{1}\edge[d] &&3\edge[rd] &&3\edge[ld]& &P(2)=&&4\edge[d] && &&&P(3)=& &4\edge[d]  && & & P(n+4)=&*+[F]{1}\edge[d]  &  &*+[F]{1} \edge[d] ,  \\
  &*+[F]{1}\edge[d] && &4\edge[d]&&&&&5\edge[d]&&&&&& &5\edge[d]&&& & &*+[F]{1}\edge[d]  &  &*+[F]{1} \edge[d] & &&& &  & \\
  &\vdots\edge[d]  &&&5\edge[d]&& &&&\vdots\edge[d]&&&& &&&\vdots\edge[d]&&&&&*+[F]{1}\edge[d]  &  &*+[F]{1} \edge[d]& &&&&&&& &  & \\
  &*+[F]{1}\edge[d] &&&\vdots\edge[d] &&&&&n+2\edge[d]  & &&&& & &n+2\edge[d]&&& &&*+[F]{1}\edge[d]  &  &*+[F]{1} \edge[d]&&&&& &  &&&& &  &  \\
       &*+[F]{1}\edge[d] &&&*+[F]{n+3}\edge[d]   &&&&& *+[F]{n+3}\edge[d] & && && & &*+[F]{n+3}\edge[d]&&& &&\vdots\edge[d]  &  &\vdots \edge[d]&&&&& &  & &&& &  & \\
       &*+[F]{1} &&&*+[F]{n+4}\edge[ld]\edge[rd]  &&&&&*+[F]{n+4}\edge[ld]\edge[rd]  & &&&& & & *+[F]{n+4}\edge[ld]\edge[rd]&&& &&*+[F]{1}\edge[d]  &  &*+[F]{1} \edge[d]&&&&& &  & &&& &  & \\
       &&  &*+[F]{1}&&*+[F]{1}&&&*+[F]{1}&  &*+[F]{1}&&&& &*+[F]{1}&&*+[F]{1}&& &&*+[F]{1}  &  &*+[F]{1} &&&&& &  & \\
  }$$
  and $P(i+1)=\rad P(i)$ for each $3 \leqslant i\leqslant n+2$.

  It is straightforward to verify that
  \begin{equation*}
  \pd S(i)=
  \begin{cases}
  \infty, &\text{if}\;\; i=1, n+3;\\
  2,&\text{if} \;\;i=2,n+4;\\
  1,&\text{if} \;\; 3 \leqslant  i\leqslant n+2.
  \end{cases}
  \end{equation*}
  Let $\mathcal{V}:=\{S(i)\mid 3 \leqslant  i\leqslant n+2 \text{ or } i=n+2\}$. Then
  $\pd\mathcal{V}=2.$
  Let $\mathcal{V}'$ be all the others simple modules in $\mod \Lambda$, that is,
  $\mathcal{V}'=\{ S(1),S(2), S(n+3),S(n+4)\}$.

  Because $\La=\oplus_{i=1}^{n+4}P(i)$, we have
  $$\ell\ell^{t_{\V}}(\Lambda_{\Lambda})=\max\{\ell\ell^{t_{\V}}(P(i)) \mid 1 \leqslant i  \leqslant n+4\}$$
  by \cite[Lemma 3.4(a)]{huard2013layer}.

  In order to compute $\ell\ell^{t_{\V}}(P(1))$, we need to find the least non-negative integer $i$
  such that $t_{\V}F_{t_{\V}}^{i}(P(1))=0$.
  Since $\top P(1)=S(1)\in \add \V'$, we have $t_{\V}(P(1))=P(1)$ by \cite[Proposition 5.9(a)]{huard2013layer}.
  Thus
  \begin{align*}\xymatrix@-1.0pc@C=0.1pt{
  &F_{t_{\V}}(P(1))=\rad t_{\V}(P(1))=\rad (P(1))=T_{m-1}\oplus P(2)\\
  }\end{align*}
  \xymatrix@-1.0pc@C=0.05pt {
  & *+[F]{1}\edge[d]             \\
  {\rm where}  \;\;\;T_{m-1}=  &*+[F]{1}\edge[d]&({\rm the \;number\; of\; 1 \;is \;}m-1).\;\\
  &\vdots\edge[d] \\
  & *+[F]{1}.}

  Since $\top T_{m-1}=S(1)\in \V'$, we have $t_{\V}(T_{m-1})=T_{m-1}$ by \cite[Proposition 5.9(a)]{huard2013layer}.
  Similarly, $t_{\V}(P(2))=P(2)$.
  We have
  \begin{align*}\xymatrix@-1.0pc@C=0.1pt{
  &t_{\V}F_{t_{\V}}(P(1))= t_{\V}(T_{m-1}\oplus P(2))= t_{\V}(T_{m-1})\oplus t_{\V}( P(2))=T_{m-1}\oplus P(2)
  }\end{align*}
  and
  \begin{align*}\xymatrix@-1.0pc@C=0.1pt{
  &F^{2}_{t_{\V}}(P(1))&=\rad t_{\V}F_{t_{\V}}(P(1))=\rad(T_{m-1}\oplus P(2))=\rad(T_{m-1})\oplus \rad(P(2))=T_{m-2}\oplus M
  .
  }\end{align*}

  \xymatrix@-1.0pc@C=0.05pt {
  &3\edge[rd]&&3\edge[ld]\\
  && 4\edge[d]  \\
  {\rm where}  \;\;\;M= && 5\edge[d]  \\
  && \vdots\edge[d]  \\
  && n+2\edge[d]  \\
  && *+[F]{n+3}\edge[d]  \\
  && *+[F]{n+4}\edge[ld]\edge[rd]  \\
  &*+[F]{1}&&*+[F]{1}
  .}
  And \begin{align*}\xymatrix@-1.0pc@C=0.1pt{
  &t_{\V}F^{2}_{t_{\V}}(P(1))&=t_{\V}(T_{m-2}\oplus M)=t_{\V}(T_{m-2})\oplus t_{\V}(M)=T_{m-2}\oplus P(n+3)
  .
  }\end{align*}

  Repeat the process,
  we have
  can get that $S(1)$ is a direct summand of $t_{\V}F^{m-1}_{t_{\V}}(P(1))$, that is $t_{\V}F^{m-1}_{t_{\V}}(P(1))\neq 0$;
  and
  $t_{\V}F^{m}_{t_{\V}}(P(1))=0.$ Moreover, $\ell\ell^{t_{\V}}(P(1))=m$.
  Similarly, we have
  \begin{equation*}
  \ell\ell^{t_{\V}}(P(i))=
  \begin{cases}
  4,&\text{if} \;\;2\leqslant  i\leqslant n;\\
  3,&\text{if} \;\;3\leqslant  i\leqslant  n+3;\\
  m+1,&\text{if} \;\;i= n+4.
  \end{cases}
  \end{equation*}
  Consequently, we conclude that $\ell\ell^{t_{\V}}(\Lambda_{\Lambda})=\max\{\ell\ell^{t_{\V}}(P(i))\mid 1 \leqslant i  \leqslant  n+4\}=m+1$.
  We have $\ell\ell^{t_{\mathcal{V}}}(\Lambda_{\Lambda})=m.$

  Note that  $\LL(\Lambda)=n+5$ and $\gldim \Lambda=\infty$.

(1) By Theorem \ref{thm1.1}(1), we have $$\tridim D^{b}(\mod \Lambda) \leqslant \LL(\Lambda)-1=n+4.$$

(2) By Theorem \ref{thm1.1}(3), we have
  $$\tridim D^{b}(\mod \Lambda) \leqslant (\pd\mathcal{V}+2)(\ell\ell^{t_{\mathcal{V}}}(\Lambda)+1)-2=(1+2)(m+1+1)-2=3m+4.$$

(3) By Theorem \ref{thm1.1}(4), we have
  $$\tridim D^{b}(\mod \Lambda) \leqslant 2(\pd\mathcal{V}+\ell\ell^{t_{\mathcal{V}}}(\Lambda))+1=2\times(2+m)+1=2m+5.$$

(4) By Theorem \ref{maintheorem3},
  $\tridim D^{b}(\mod \Lambda) \leqslant 2\ell\ell^{t_{\V}}(\Lambda_{\Lambda})+\pd\mathcal{V}-1=2m+1-1=2m.$

  Since $n>2m+1$ and $m\geqslant 5$, we get that
  $$2m<2m+5 =\inf\{ 2(\pd\mathcal{V} +\ell\ell^{t_{\mathcal{V}}}(\Lambda))+1,\gldim \Lambda,
   \LL(\Lambda)-1,(\pd\mathcal{V} +2)(\ell\ell^{t_{\mathcal{V}}}(\Lambda)+1)-2 \} .$$
  That is, our new upper bound for the dimension of the dimenison of $D^{b}(\mod \Lambda)$ can get a better upper bound of bounded derived categories sometimes.

  }
  \end{example}

\vspace{0.6cm}

{\bf Acknowledgements.}
The author would like to thank Dong Yang for his reading of this paper
and for his comments.
This work was supported by the National Natural Science Foundation of China(Grant No. 12001508).


%
\bibliographystyle{abbrv}

\end{document}